\begin{document}

\title{Compactified Picard stacks over $\overline {\M}_g$}
\author{Margarida Melo}\thanks{Partially supported by a Funda\c{c}\~ao Calouste
Gulbenkian fellowship.}

\newcommand{\av}{``}
\newcommand{\uv}{''}
\newcommand{\forn}{\forall n\in\mathbb{N}}
\newcommand{\raw}{\to}
\newcommand{\Raw}{\Rightarrow}
\newcommand{\law}{\gets}
\newcommand{\Law}{\Leftarrow}
\newcommand{\Lra}{\Leftrightarrow}
\newcommand{\C}{\mathcal C}
\newcommand{\K}{\mathcal K}
\newcommand{\A}{\mathcal A}
\newcommand{\N}{\mathbb N}
\newcommand{\X}{\mathcal X}
\newcommand{\M}{\mathcal M}
\newcommand{\Z}{\mathbb Z}
\renewcommand{\P}{\mathbb P}
\newcommand{\ra}{\rightarrow}

\def\hifen{\discretionary{-}{-}{-}}

\newtheorem{teo}{Theorem}[section]
\newtheorem{prop}[teo]{Proposition}
\newtheorem{lem}[teo]{Lemma}
\newtheorem{rem}[teo]{Remark}
\newtheorem{defi}[teo]{Definition}
\newtheorem{ex}[teo]{Example}
\newtheorem{exs}[teo]{Examples}
\newtheorem{cor}[teo]{Corollary}
\pagestyle{myheadings}
\markboth{\small{M. Melo}}{\small{\textit{Compactified Picard stacks over $\overline {\M}_g$}}}

\maketitle

\begin{abstract}
We study algebraic (Artin) stacks over $\overline{\mathcal M}_g$
giving a functorial way of compactifying the relative degree $d$
Picard variety for families of stable curves. We also describe for
every $d$ the locus of genus $g$ stable curves over which we get
Deligne-Mumford stacks strongly representable over
$\overline{\mathcal M}_g$.
\end{abstract}

\renewcommand{\theequation}{\arabic{section}.\arabic{equation}}

\section{Introduction}

In this paper we study (compactified) moduli stacks of line bundles
over the moduli stack of stable curves $\overline{\mathcal M}_g$.
The aim is to give a functorial way of getting compactified Picard
varieties (of degree $d$) for families of stable curves. In other
words, we study geometrically meaningful algebraic stacks
$\overline{\mathcal P}_{d,g}$ with a map to $\overline{\mathcal
M}_g$ such that, given a family of stable curves $f:\mathcal
X\rightarrow S$, the fiber product of $\overline{\mathcal
P}_{d,g}\rightarrow \overline{\mathcal M}_g$ by the moduli map
$\mu_f:S\rightarrow \overline{\mathcal M}_g$ is either a
compactification of the relative degree $d$ Picard variety
associated to $f$ or has a canonical map onto it (see below).

There are many constructions of compactified Picard varieties of
stable curves. We will choose the one built by Caporaso in
\cite{cap}. This compactification, $\overline P_{d,g}$, is
constructed as a GIT-quotient and has a proper morphism
$\phi_d:\overline P_{d,g}\rightarrow \overline M_g$ such that
$\phi_d^{-1}(M_g^0)$ is isomorphic to the \av universal Picard
variety of degree $d$\av, Pic$_g^d$, parametrize isomorphism
classes of line bundles of degree $d$ over automorphism-free
nonsingular curves. Points in $\overline P_{d,g}$ correspond to
isomorphism classes of balanced line bundles of degree $d$ in
quasistable curves of genus $g$ (see Definition \ref{balanced}). In
particular, given $[X]\in \overline M_g^0$, the smooth locus of
$\phi_d^{-1}(X)$ is isomorphic to the disjoint union of a finite
number of copies of the jacobian of $X$, $J_X$.

Let $f:\mathcal X\rightarrow S$ be a family of (genus $g$) stable
curves. By a compactification of the relative Picard variety of
degree $d$ associated to $f$ we mean a projective $S$-scheme $P$
whose fiber over closed points $\xi$ of $S$ is isomorphic to
$\phi_d^{-1}(X_\xi)$ ($X_{\xi}$ denotes the fiber of $f$ over
$\xi$).

Let $(d-g+1,2g-2)=1$. Then, the GIT-quotient yielding $\overline
P_{d,g}$ is geometric (see \cite{cap}, Prop. 6.2) and the quotient
stack associated to it, $\overline{\mathcal P}_{d,g}$, is a
Deligne-Mumford stack with a strongly representable morphism onto
$\overline{\mathcal M}_g$ (see \cite{capneron}, 5.9). So, given a
family of stable curves $f:\mathcal X\rightarrow S$, the base change
of the moduli map $\mu_f:S\rightarrow \overline{\mathcal M}_g$ by
$\overline{\mathcal P}_{d,g}\rightarrow \overline{\mathcal M}_g$ is
a scheme, $\overline P_f^d$, yielding a compactification of the
relative degree $d$ Picard variety associated to $f$. Moreover,
$\overline{\mathcal P}_{d,g}$ parametrizes N\'eron models of
Jacobians in the following way. Let $B$ be a smooth curve defined
over an algebraically closed field $k$ with function field $K$ and
$\mathcal X_K$ a smooth genus $g$ curve over $K$ whose regular
minimal model over $B$ is a family $f:\mathcal X\rightarrow B$ of
stable curves. Then, the smooth locus of the map $\overline
P_f^d\rightarrow B$ is isomorphic to the N\'eron model of
Pic$^d\mathcal X_k$ over $B$ (see Theorem 6.1 of loc. cit.).

If $(d-g+1,2g-2)\neq 1$, the quotient stack $\overline{\mathcal P}_{d,g}$ is not Deligne-Mumford. In particular, its natural map onto $\overline{\mathcal M}_g$ is not representable. In section \ref{dgeneralstack} we consider the restriction of $\overline{\mathcal P}_{d,g}$ to the locus $\overline{\mathcal M}_g^d$ of $d$-general curves, i.e., the locus of genus $g$ stable curves over which the GIT-quotient above is geometric. In Prop. \ref{propdm} we show that this restriction, denoted by $\overline{\mathcal P}_{d,g}^{Ner}$, is a Deligne-Mumford stack and is endowed with a strongly representable map onto $\overline{\mathcal M}_g^d$. So, it gives a functorial way of getting a compactification of the relative degree $d$ Picard variety for families of $d$-general curves, generalizing Prop. 5.9 of \cite{capneron}.

In section \ref{dgeneraldesc} we give a combinatorial description of the locus in $\overline{M}_g$ of $d$-general curves, $\overline{M}_g^d$. For each $d$, $\overline{M}_g^d$ is an open subscheme of $\overline{M}_g$ containing all genus $g$ irreducible curves and $\overline M_g^d=\overline{M}_g^{d^\prime}$ if and only if $(d-g+1,2g-2)=(d^\prime -g+1,2g-2)$.
In Prop. \ref{lattice} we also show that the $\overline{M}_g^d$ yield a lattice of open subschemes of $\overline M_g$ parametrized by the (positive) divisors of $2g-2$.

Let us now consider a family $f:\mathcal X\rightarrow S$ of genus $g$ stable curves which are not $d$-general. Then the fiber product of the moduli map of $f$, $\mu_f:S\rightarrow \overline{\mathcal M}_g$ by the map $\overline{\mathcal P}_{d,g}\rightarrow \overline{\mathcal M}_g$
is not a scheme. Indeed, it is not even an algebraic space. The best we can get here is that it is canonically endowed with a proper map onto a scheme yielding a compactification of the relative degree $d$ Picard variety associated to $f$ (see Prop. \ref{propcan}).

In \cite{capneron} 5.10, a modular description of the quotient stack $\overline{\mathcal P}_{d,g}$ is given in the case $(d-g+1,2g-2)=1$.
In section \ref{modular} we search for a modular description of $\overline{\mathcal P}_{d,g}$ for any $d$. The main difficulty here is that in the general case it is not possible to construct the analogue of Poincar\'e line bundles for families of stable curves, fundamental in the modular description given in loc. cit, 5.10. To overcome this difficulty we will study the moduli stack of balanced line bundles of relative degree $d$ for families of quasistable curves of genus $g$, defined in \ref{modulardef} and denoted by $\overline{\mathcal G}_{d,g}$.
In Theorem \ref{geomdesc} we show that $\overline{\mathcal G}_{d,g}$ is an algebraic stack with a morphism onto $\overline{\mathcal M}_g$. However, there is an action of $\mathbb G_m$ on $\overline{\mathcal G}_{d,g}$ compatible with the map to $\overline{\mathcal M}_g$, so it cannot be representable over $\overline{\mathcal M}_g$. In section \ref{rigidification} we give a modular description of the rigidification (defined by Abramovich, Corti and Vistoli in \cite{avc}) of $\overline{\mathcal G}_{d,g}$ along the action of $\mathbb G_m$ and we show that it is isomorphic to $\overline{\mathcal P}_{d,g}$. Moreover, $\overline{\mathcal G}_{d,g}$ is a $\mathbb G_m$-gerbe over $\overline{\mathcal P}_{d,g}$.

Finally, we ask if, for every $d$, $\overline{\mathcal P}_{d,g}$,
parametrizes N\'eron model of Jacobians of smooth curves as it does
if $(d-g+1,2g-2)=1$.

In the last section we show that the answer is no if
$(d-g+1,2g-2)\neq 1$, essentially because $\overline{\mathcal
P}_{d,g}$ is not representable over $\overline{\mathcal M}_g$. We
here focus on the case $d=g-1$, which is particularly important. In
fact for this degree all known compactified Jacobians are
canonically isomorphic and are endowed with a theta divisor which is
Cartier and ample (see \cite{alex}).

\subsection{Preliminaries and notation}

We will always consider schemes and algebraic stacks locally of finite type over an algebraically closed base field $k$.

A curve $X$ will always be a connected projective curve over $k$ having at most nodes as singularities. We will denote by $C_1, \dots, C_\gamma$ the irreducible components of $X$.

\subsubsection{Line bundles on reducible curves}

We will denote by $\omega_X$ the canonical or dualizing sheaf of $X$.
For each proper subcurve $Z$ of $X$ (which we always assume to be complete), denote by $Z^\prime:=\overline {X\setminus Z}$, by $k_Z:=\sharp (Z\cap Z^\prime)$ and by $g_Z$ its arithmetic genus.
Recall that, if $Z$ is connected, the adjunction formula gives
\begin{equation}\label{omegaZ}
w_Z:=\mbox{deg}_Z\omega_X=2g_Z-2+k_Z.
\end{equation}

For $L\in $ Pic$X$ its \textit{multidegree} is \underline{deg}$L:=(\mbox{deg}_{C_1}L,\dots,$deg$_{C_\gamma}L)$ and its (total) degree is deg$L:=$deg$_{C_1}L+\dots +$deg$_{C_\gamma}L$.

Given $\underline{d}=(d_1,\dots, d_\gamma)\in \Z^\gamma$, we set Pic$^{\underline d}X:=\{L\in$Pic$X:$ \underline{deg}$L=\underline{d}\}$ and Pic$^dX:=\{L\in$Pic$X:$deg$L=d\}$. We have that Pic$^d X=\sum_{|\underline d |=d}$Pic$^{\underline d}X$, where $|\underline d|=\sum_{i=1}^\gamma d_i$.

The \textit{generalized jacobian} of $X$ is $$\mbox{Pic}^{\underline 0}\mbox{ X}=\{L\in \mbox{ Pic }X: \underline{\mbox{deg }}L=(0,\dots,0)\}.$$

\subsubsection{The relative Picard functor} \label{picard}

Let $X$ be an $S$-scheme with structural morphism $\pi:X\rightarrow S$. Given another $S$-scheme $T$, we will denote by $\pi_T:X_T\rightarrow T$ the base-change of $\pi$ under the structural morphism $T\rightarrow S$.
\begin{equation*}
\xymatrix{
{X_T:=T\times_SX} \ar[d]_{\pi_T} \ar[r] & {X} \ar[d]^{\pi} \\
{T} \ar[r] & {S}
}
\end{equation*}

By a family of nodal curves we mean a proper and flat morphism of schemes over $k$, $f:\mathcal X \rightarrow B$, such that every closed fiber of $f$ is a connected nodal curve.

We will denote by $\mathcal Pic_{f}$ the \textit{relative Picard
functor} associated to $f$ and by $\mathcal Pic_f^d$ its subfunctor
of line bundles of relative degree $d$. $\mathcal Pic_f$ is the
fppf-sheaf associated to the functor $\mathcal
P$:SCH$_B\rightarrow$Sets which associates to a scheme $T$ over $B$
the set Pic$(\mathcal X_T)$. In particular, if the family $f$ has a
section, $\mathcal Pic_f(T)=$Pic$(\mathcal X_T) /$Pic$(T)$ (see
\cite{BLR}, chapter 8 for the general theory about the construction
of the relative Picard functor).

Thanks to more general results of D. Mumford and A. Grothendieck in \cite{mumford} and \cite{SGA}, we know that $\mathcal Pic_f$ (and also $\mathcal Pic_f^d$) is representable by a scheme Pic$_f$, which is separated if all geometric fibers of $f$ are irreducible (see also \cite{BLR}, 8.2, Theorems 1 and 2).
Pic$_g^d$, the \av universal degree $d$ Picard variety\uv, coarsely represents the degree $d$ Picard functor for the universal family of (au\-to\-mor\-phism-free) nonsingular curves of genus $g$, $f_g:\mathcal Z_g\rightarrow M_g^0$.
Furthermore, it was proved by Mestrano and Ramanan in \cite{mestrano} for char $k$=0 and later on by Caporaso in \cite{cap} for any characteristic that Pic$_g^d$ is a fine moduli space, that is, there exists a Poincar\'e line bundle over Pic$_g^d\times_{M_g^0}\mathcal Z_g$, if and only if the numerical condition $(d-g+1,2g-2)=1$ is satisfied.

\subsubsection{Stable and semistable curves}

A \textit{stable} curve is a nodal connected curve of genus $g\ge2$ with ample dualizing sheaf. We will denote by $\overline M_g$ (resp. $\overline \M_g$) the moduli scheme (resp. stack) of stable curves and by $\overline M_g^0\subset \overline M_g$ the locus of curves with trivial automorphism group.

A \textit{semistable} curve is a nodal connected curve of genus $g\ge 2$ whose dualizing sheaf has non-negative multidegree.

A nodal curve $X$ is stable (resp. semistable) if, for every smooth
rational component $E$ of $X$, $k_E\ge 3$ (resp. $k_E\ge 2$.) If $X$
is semistable, the smooth rational components $E$ such that $k_E=2$
are called \textit{exceptional}.

A semistable curve is called \textit{quasistable} if two exceptional
components never meet.

The \textit{stable model} of a semistable curve $X$ is the stable curve obtained by contracting all the exceptional components of $X$.

A \textit{family of stable} (resp. \textit{semistable},
resp.\textit{quasistable}) curves is a flat projective morphism
$f:\mathcal X\rightarrow B$ whose geometrical fibers are stable
(resp. semistable, resp. quasistable) curves. A line bundle of
degree $d$ on such a family is a line bundle on $\X$ whose
restriction to each geometric fiber has degree $d$.

\subsection{Balanced line bundles over semistable curves}

Recall that Gie\-se\-ker's construction of $\overline M_g$ consists of a GIT-quotient of the action of $PGL(N)$ on a Hilbert scheme where it is possible to embed all semistable curves of genus $g$ (the "Hilbert point" of the curve) (see \cite{gieseker}). Gieseker shows that in this Hilbert scheme, in order for the Hilbert point of a curve to be GIT-semistable, it is necessary that the multidegree of the line bundle giving its projective realization must satisfy an inequality, called the "Basic Inequality". Later, in \cite{cap}, Caporaso shows that this condition is also sufficient.

We will now give the definition of this inequality, extending the terminology introduced in \cite{ccc}.

\begin{defi} \label{balanced}
Let $X$ be a semistable curve of genus $g\ge 2$ and $L$ a degree $d$ line bundle on $X$.
\begin{itemize}
\item[(i)] We say that $L$ (or its multidegree) is semibalanced if, for every connected proper subcurve $Z$ of $X$ the following (\av Basic Inequality\av) holds
\begin{equation}\label{basic}
m_Z(d):=\frac{d w_Z}{2g-2}-\frac {k_Z}2\le \mbox{deg}_ZL\le\frac{d w_Z}{2g-2}+\frac{k_Z}2:=M_Z(d).
\end{equation}
\item[(ii)] We say that $L$ (or its multidegree) is balanced if it is semibalanced and if deg$_EL=1$ for every exceptional component $E$ of $X$.
The set of balanced line bundles of degree $d$ of a curve $X$ is denoted by $B_X^d$.
\item[(iii)] We say that $L$ (or its multidegree) is stably balanced if it is ba\-lan\-ced and if for each connected proper subcurve
$Z$ of $X$ such that deg$_ZL=m_Z(d)$, the complement of $Z$, $Z^\prime$, is a union of exceptional components.

The set of stably balanced line bundles of degree $d$ on $X$ will be denoted by $\tilde B_X^d$.
\end{itemize}
\end{defi}

\begin{rem}\upshape{
Balanced multidegrees are representatives for multidegree classes of line bundles on $X$ up to twisters (that is, to elements in the degree class group of $X$, $\Delta_X$, which is a combinatorial invariant of the curve defined in \cite{cap}). More particularly, in \cite{capneron}, Proposition 4.12, Caporaso shows that, if $X$ is a quasistable curve, every multidegree class in $\Delta_X$ has a semibalanced representative and that a balanced multidegree is unique in its equivalence class if and only if it is stably balanced.}
\end{rem}

We now list some easy consequences of the previous definition.

\begin{rem}\label{rembalanced}\upshape{
\begin{enumerate}

\item[(A)]
If a semistable curve $X$ admits a balanced line bundle $L$, then $X$ must be
quasistable.
\item[(B)] To verify that a line bundle $L$ is balanced it is enough to check that deg$_ZL\ge m_Z(d)$, for each proper subcurve $Z$ of $X$ and that deg$_EL=1$ for each exceptional component $E$ of $X$.
\item[(C)] If $X$ is a stable curve, then a balanced line bundle $L$ on $X$ is stably balanced if and only if, for each proper connected subcurve $Z$ of $X$, deg$_ZL\neq m_{Z}(d)$.
\item[(D)] Let $X$ be a stable curve consisting of two irreducible components, $Z$ and $Z^\prime$, meeting in an arbitrary number of nodes. Then $X$ admits a degree $d$ line bundle which is balanced but not stably balanced if and only if $\frac{d-g+1}{2g-2}w_Z\in\Z$ (equivalently if $\frac{d-g+1}{2g-2}w_{Z^\prime}\in \Z$).
\item[(E)] A line bundle is balanced (resp. stably balanced) if and only if $L\otimes \omega_X^{\otimes n}$ is balanced (resp.stably balanced), for $n\in \Z$. So, given integers $d$ and $d^\prime$ such that $\exists n\in \Z$ with $d\pm d^\prime=n(2g-2)$, there are natural isomorphisms $B_X^d\cong B_X^{d^\prime}$ (and $\tilde B_X^d\cong \tilde B_X^{d^\prime}$).
\end{enumerate}}
\end{rem}

For (A) and (B) see \cite{capest} Remark 3.3. (C) and (E) are immediate consequences of the definition. For (D) note that, given a balanced $\gamma$-uple $\underline d\in \Z^\gamma$ such that $|\underline d|=d$, there exists a (balanced degree $d$) line bundle $L$ in $X$ such that \underline{deg}$L=\underline d$. Since $k_Z=w_Z-2g_Z+2$, we can write $m_Z(d)$ as $\frac{d-g+1}{2g-2}w_Z+g_Z-1$, which is an integer by hypothesis. In the same way $m_{Z^\prime}(d)=d-m_Z(d)$ is an integer too, so $(m_Z(d),m_{Z^\prime}(d))$ is a balanced multidegree which is not stably balanced.

\subsection{The compactified Picard variety of degree $d$ over $\overline M_g$}

Let $\overline{P}_{d,g}\rightarrow \overline{M}_g$ be Caporaso's compactification of the universal Picard variety of degree $d$, Pic$_g^d\rightarrow M_g^0$, constructed in \cite{cap}.

For $d>>0$, $\overline P_{d,g}$ is the GIT-quotient
$$\pi_d:H_d\rightarrow H_d/PGL(r+1)=:\overline P_{d,g}$$
where $H_d=(Hilb_{\P^r}^{dt-g+1})^{ss}$, the locus of GIT-semistable
points in the Hilbert scheme $Hilb_{\P^r}^{dt-g+1}$, which is
naturally endowed with an action of $PGL(r+1)$ leaving $H_d$
invariant. $\overline{P}_{d,g}$ naturally surjects onto
$\overline{M}_g$ via a proper map
$\phi_d:\overline{P}_{d,g}\rightarrow \overline{M}_g$ such that
$\phi_d^{-1}(\overline{M}_g^0)$ is isomorphic to Pic$_g^d$.

For $[X]\in \overline M_g$, denote by $\overline{P}_{d,X}$ the inverse image of $X$ by $\phi_d$. $\overline P_{d,X}$ is a connected projective scheme having at most $\Delta_X$ irreducible components, all of dimension $g$. In addition, if $X$ is automorphism-free, the smooth locus of $\overline{P}_{d,X}$ is isomorphic to the disjoint union of a finite number of copies of $J_X$.

Points in $H_d$ correspond to nondegenerate quasistable curves in $\P^r$ embedded by a balanced line bundle.

Let $H_d^s\subseteq H_d$ be the locus of GIT-stable points. These
correspond to nondegenerate quasistable curves in $\mathbb P^r$
embedded by a stably balanced line bundle of degree $d$.

\begin{defi}\label{dgeneral}
Let $X$ be a semistable curve of arithmetic genus $g\ge 2$. We say that $X$ is $d$-general if all degree $d$ balanced line bundles on $X$ are stably balanced. Otherwise, we will say that $X$ is $d$-special.
\end{defi}

Denote by $U_d:=(\phi_d\circ\pi_d)^{-1}(\overline{M}_g^d)$ the subset of $H_d$ corresponding to $d$-general curves. $U_d$ is an open subset of $H_d$ where the GIT-quotient is geometric (i.e., all fibers are $PGL(r+1)$-orbits and all stabilizers are finite and reduced), invariant under the action of $PGL(r+1)$.

$U_d=H_d$ if and only if $(d-g+1,2g-2)=1$, so the GIT-quotient yielding $\overline{P}_{d,g}$ is geometric if and only if $(d-g+1,2g-2)=1$ (see Prop. 6.2 of loc. cit.).

\section{Balanced Picard stacks on $d$-general curves}\label{dgeneralstack}

By reasons that will be clear in a moment (see Section \ref{neron1}), call $\overline P_{d,g}^{Ner}$ the GIT-quotient of $U_d$ by $PGL(r+1)$.

For the time being let $$G:=PGL(r+1).$$

Let us now consider the quotient stack
$$\overline{\mathcal P}_{d,g}^{Ner}:=[U_d/G].$$

Recall that, given a scheme $S$ over $k$,a section of
$\overline{\mathcal P}_{d,g}^{Ner}$ over $S$ consists of a pair
$(\phi:E\rightarrow S, \psi:E\rightarrow U_d)$ where $\phi$ is a
$G$-principal bundle and $\psi$ is a $G$-equivariant morphism.
Arrows correspond to those pullback diagrams which are compatible
with the morphism to $U_d$.

Let $\overline{\mathcal M}_g^d\subset \overline{\mathcal M}_g$ be
the moduli stack of $d$-general stable curves. There is a natural
map from $\overline{\mathcal P}_{d,g}^{Ner}$ to $\overline{\mathcal
M}_g^ d$, the restriction to $d$-general curves of the moduli stack
of stable curves, $\overline{\mathcal M}_g$.
In fact, the restriction to $U_d$ of the stabilization morphism from 
$H_d$ to $\overline{M}_g$ factors through $\overline{M}_g^d$ and,
since $U_d$ is invariant under the action of $G$, this yields a map
from $\overline{\mathcal P}_{d,g}^{Ner}$ to $\overline{M}_g^d$.

Recall the following definitions.

\begin{defi}
We say that a morphism of stacks $f:\mathcal F \rightarrow \mathcal G$ is \textit{representable} (resp. \textit{strongly representable}) if for any scheme $Y$ with a morphism $Y\rightarrow \mathcal G$ the fiber product $\mathcal F \times_\mathcal G Y$ is an algebraic space (resp. a scheme).
\end{defi}

Note that morphisms of schemes are always strongly representable.

\begin{defi}
A \textit{coarse moduli space} for an stack $\mathcal F$ is an algebraic space $F$ together with a morphism $\pi:\mathcal F\rightarrow F$ satisfying the following properties:
\begin{enumerate}
 \item 
 for any algebraically closed field $\Omega$, $\pi$ induces an isomorphism between the connected components of the grou\-poids $\mathcal F(\mbox{Spec }\Omega)$ and $F(\mbox{Spec }\Omega)$;
\item $\pi$ is universal for morphisms from $\mathcal F$ onto algebraic spaces.
\end{enumerate}
\end{defi}

\begin{ex}\label{coarse}\upshape{
GIT-geometric quotients by the action of an algebraic group in a scheme are coarse moduli spaces for the quotient stack associated to that action (see \cite{vistoli} 2.1 and 2.11).}
\end{ex}

\begin{lem}\label{rep}
Let $f:\mathcal F\rightarrow \mathcal G$ be a representable morphism of Deligne-Mumford stacks admitting coarse moduli spaces $F$ and $G$, respectively. Then, if the morphism induced by $f$ in the coarse moduli spaces, $\pi:F\rightarrow G$, is strongly representable, also $f$ is strongly representable.
\end{lem}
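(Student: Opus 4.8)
The plan is to unwind the definition of strong representability by testing $f:\mathcal{F}\to\mathcal{G}$ against an arbitrary scheme $Y$ with a map $Y\to\mathcal{G}$, and to show that the fiber product $\mathcal{F}\times_{\mathcal{G}}Y$ is a scheme. Since $f$ is representable by hypothesis, this fiber product is already known to be an algebraic space; what remains is to promote \emph{algebraic space} to \emph{scheme}, and for this I would exhibit $\mathcal{F}\times_{\mathcal{G}}Y$ as (an algebraic space finite over, or at least affine over) a scheme built out of the strongly representable map $\pi:F\to G$ on coarse spaces.

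The key geometric input I would use is the interaction between fiber products and coarse moduli spaces. First I would compose $Y\to\mathcal{G}\to G$ to get a map $Y\to G$, and form the scheme $F\times_G Y$, which is a scheme precisely because $\pi:F\to G$ is strongly representable and $Y$ is a scheme. The heart of the argument is then to compare the algebraic space $\mathcal{F}\times_{\mathcal{G}}Y$ with the scheme $F\times_G Y$: I expect a canonical morphism
\begin{equation*}
\mathcal{F}\times_{\mathcal{G}}Y\longrightarrow F\times_G Y
\end{equation*}
coming from the coarse moduli maps $\mathcal{F}\to F$ and $\mathcal{G}\to G$ together with the projection to $Y$. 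The plan is to argue that this morphism is finite (in particular affine), so that the source, being an algebraic space affine over a scheme, is itself a scheme by the standard descent result that an algebraic space affine over a scheme is a scheme.

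The main step supporting the finiteness claim is that for a Deligne-Mumford stack the map to its coarse moduli space is proper and quasi-finite (indeed, étale-locally it is a quotient by a finite group), hence finite. I would first establish that $\mathcal{F}\times_{\mathcal{G}}Y\to F\times_G Y$ inherits this property: after base change the relevant map is essentially the coarse-space map of the Deligne-Mumford stack $\mathcal{F}\times_{\mathcal{G}}Y$ restricted over the scheme $F\times_G Y$, and properness and quasi-finiteness are preserved under the base changes involved. The delicate point is verifying that $F\times_G Y$ really does serve as a coarse space (or at least a scheme over which our algebraic space is finite) for the fiber product stack; this requires checking that forming coarse moduli spaces is compatible enough with the fiber product $\mathcal{F}\times_{\mathcal{G}}Y$, which is not automatic for stacks but is available here because $Y$ and $G$ are honest schemes and $\pi$ is strongly representable.

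The step I expect to be the main obstacle is precisely this compatibility: coarse moduli spaces do not commute with arbitrary fiber products, so I cannot simply assert $\mathcal{F}\times_{\mathcal{G}}Y$ has coarse space $F\times_G Y$. I would circumvent the general failure by exploiting that one leg of the fiber product, namely $Y\to\mathcal{G}$, factors through the scheme $G$, which rigidifies the situation; concretely I would work étale-locally on $Y$ (and on $G$), reduce to the case where the stacks are quotients $[V/\Gamma]$ by finite groups, and check the finiteness of the comparison morphism on such a chart, where everything becomes an explicit statement about finite group quotients of schemes. Once finiteness over the scheme $F\times_G Y$ is in hand, strong representability of $f$ follows, since $Y$ was arbitrary.
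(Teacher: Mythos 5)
Your proposal follows essentially the same route as the paper's own proof: both use strong representability of $\pi$ to form the scheme $F\times_G Y$, consider the canonical comparison morphism $\mathcal F\times_{\mathcal G}Y\rightarrow F\times_G Y$, and show it is proper with finite fibers by inheriting these properties from the coarse moduli map of the Deligne--Mumford stack $\mathcal F$. The only (immaterial) difference is the closing citation: the paper deduces that the comparison map is projective and invokes Viehweg's criterion that an algebraic space projective over a scheme is a scheme, while you deduce it is finite, hence affine, and invoke descent for affine morphisms --- two standard ways of finishing the same argument.
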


\begin{proof}
We must show that, given a scheme $B$ with a morphism to $\mathcal G$, the fiber product of $f$ with this morphism, $\mathcal F_B$, is a scheme. 
\begin{equation*}
\xymatrix{
{\mathcal F_B} \ar[d] \ar[r] &{\mathcal F} \ar[r] \ar[d]^{f} & {F} \ar[d]^{\pi}\\
{B} \ar[r] & {\mathcal G} \ar[r] & {G}
}
\end{equation*}
Since $f$ is representable, we know that $\mathcal F_B$ is an algebraic space, so to show that it is indeed a scheme it is enough to show that there is a projective morphism from $\mathcal F_B$ to a scheme (see \cite{vie} 9.4).
Consider the fiber product of the induced morphism from $B$ to $G$ with $\pi$, $F_B$. Since, by hypothesis, $\pi$ is representable, $F_B$ is a scheme and is endowed with a natural morphism to $\mathcal F_B$, $\rho:\mathcal F_B\rightarrow  F_B$, the base change over $B$ of the map from $\mathcal F$ to $F$. Since $F$ is the coarse moduli space of $\mathcal F$, this map is proper (see \cite{vistoli} 2.1), so also $\rho$ is proper. Now, to show that $\rho$ is projective it is enough to see that it has finite fibers, which follows from the fact that the stacks are Deligne-Mumford.
\end{proof}

\begin{prop} \label{propdm}
The quotient stack $\overline{\mathcal P}_{d,g}^{Ner}$ is Deligne-Mumford for every $d\in \Z$ and for every $g\ge 2$ and is strongly representable over $\overline{\mathcal M}_g^ d$.
\end{prop}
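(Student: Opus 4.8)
The plan is to prove the two assertions separately but using the same underlying mechanism. First I would establish that $\overline{\mathcal P}_{d,g}^{Ner}=[U_d/G]$ is a Deligne-Mumford stack. The key input is that the GIT-quotient of $U_d$ by $G=PGL(r+1)$ is geometric, as recalled just above the statement: all stabilizers are finite and reduced. For a quotient stack $[U/G]$ with $G$ a smooth affine algebraic group acting on a scheme $U$, the quotient is automatically an Artin stack, and it is Deligne-Mumford precisely when the stabilizers of the geometric points are finite and reduced (equivalently, unramified inertia). Since geometricity of the quotient on $U_d$ gives exactly finite reduced stabilizers, I would cite this criterion to conclude that $\overline{\mathcal P}_{d,g}^{Ner}$ is Deligne-Mumford.

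Next I would produce coarse moduli spaces for both stacks so that Lemma \ref{rep} can be applied. By Example \ref{coarse}, since the GIT-quotient $U_d\to U_d/G=\overline P_{d,g}^{Ner}$ is geometric, $\overline P_{d,g}^{Ner}$ is a coarse moduli space for $\overline{\mathcal P}_{d,g}^{Ner}$. Likewise $\overline M_g^d$ is the coarse moduli space of $\overline{\mathcal M}_g^d$, being an open substack of $\overline{\mathcal M}_g$ whose coarse space is the corresponding open subscheme of $\overline M_g$. Thus both stacks are Deligne-Mumford and admit coarse moduli spaces, so the hypotheses of Lemma \ref{rep} on the existence of $F$ and $G$ are met.

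To invoke Lemma \ref{rep} I must check its two remaining hypotheses: that the map $f:\overline{\mathcal P}_{d,g}^{Ner}\to\overline{\mathcal M}_g^d$ is representable, and that the induced map on coarse spaces $\phi_d:\overline P_{d,g}^{Ner}\to\overline M_g^d$ is strongly representable. The latter is immediate since it is a morphism of schemes, which are always strongly representable. For representability of $f$, I would argue via the standard criterion that a morphism of Deligne-Mumford stacks is representable if and only if it is faithful on automorphism groups, i.e. for every geometric point the induced map of automorphism groups is injective. Here the automorphisms of a point of $\overline{\mathcal P}_{d,g}^{Ner}$ are automorphisms of a quasistable curve $X$ in $\mathbb P^r$ together with a balanced line bundle, and mapping to $\overline{\mathcal M}_g^d$ forgets the line bundle and stabilizes; over $d$-general curves the extra rigidity coming from the line bundle being stably balanced forces this forgetful map on automorphisms to be injective. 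Once representability of $f$ is in hand, Lemma \ref{rep} yields that $f$ is strongly representable, which is the second assertion.

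The main obstacle I expect is verifying representability of $f$, i.e. the injectivity of $f$ on automorphism groups over the $d$-general locus. This is where the distinction between balanced and stably balanced line bundles is doing real work: the GIT-stable points $H_d^s$ correspond exactly to stably balanced line bundles, and over $\overline M_g^d$ all balanced bundles are stably balanced, so $U_d$ lies in the stable locus and the stabilizers inject into automorphisms of the underlying curve. I would make this precise by comparing the stabilizer of a point of $U_d$ under the $PGL(r+1)$-action with $\mathrm{Aut}(X)$, showing that an automorphism of the embedded curve acting trivially on the moduli point of $X$ (hence fixing $X$ up to the chosen projective embedding) must be trivial, using that the line bundle is stably balanced so that no nontrivial automorphism of the pair reduces to the identity on $X$. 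This essentially recovers, over the $d$-general locus, the argument underlying Prop. 5.9 of \cite{capneron}, which treats the case $U_d=H_d$.
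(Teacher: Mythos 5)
Your proposal follows essentially the same route as the paper's proof: Deligne-Mumfordness from geometricity of the GIT-quotient on $U_d$ (semistable $=$ stable over the $d$-general locus), representability of the map to $\overline{\mathcal M}_g^d$ via the injectivity-on-automorphism-groups criterion with the key input that stabilizers of GIT-stable orbits inject into the automorphism group of the stable model (which the paper settles by citing \cite{cap}, section 8.2), and strong representability by applying Lemma \ref{rep} to the coarse moduli spaces furnished by the geometric GIT-quotients as in Example \ref{coarse}. The only difference is presentational: where you sketch the stabilizer-injectivity argument and flag it as the main obstacle, the paper disposes of it by citation.
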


\begin{proof}
The fact that $\overline{\mathcal P}_{d,g}^{Ner}$ is Deligne-Mumford comes from the well known fact that
a quotient stack is Deligne-Mumford if and only if the action of the group on the scheme is GIT-geometric, that is, if all stabilizers are finite and reduced. Since $U_d$ is the locus of curves where balanced line bundles are necessarily stably balanced, the Hilbert point of a $d$-general curve is GIT-semistable if and only if it is GIT-stable, so the GIT-quotient of $U_d$ by $G$ is geometric.

The proof of the strong representability of the natural map from $\overline{\mathcal P}_{d,g}^{Ner}$ to $\overline{\mathcal M}_g^ d$ consists on two steps: first we prove that it is representable and then we use it to prove strong representability.

To prove representability it is sufficient to see that given any section of our quotient stacks over the spectrum of an algebraically closed field $k^\prime$, the automorphism group of it injects into the automorphism group of its image in $\overline{\mathcal M}_g^d$ (see for example \cite{av} 4.4.3). But a section of our quotient stack over an algebraically closed field consists of a map onto a orbit of the action of $G$ in $U_d$. So, the automorphism group of that section is isomorphic to the stabilizer of the orbit. The image of our section consists of a stable curve $X$: the stable model of the projective curve associated to that orbit. As this must be $d$-general, it is GIT-stable and we can use \cite{cap} section 8.2 to conclude that the stabilizer of the orbit injects into the automorphism group of $X$.

So, the map from $\overline{\mathcal P}_{d,g}^{Ner}$ to $\overline{\mathcal M}_g^ d$ is representable. It follows now immediately that it is also strongly representable from Lemma \ref{rep} and the fact that the GIT-quotients yielding $\overline P_{d,g}^{Ner}$ and $\overline{M}_g$ are geometric (see Example \ref{coarse}).
\end{proof}

\begin{defi}\label{functorial}
Let $f:\mathcal X\rightarrow S$ be a family of stable curves. A compactification of the relative Picard variety of degree $d$ associated to $f$ is a projective $S$-scheme $P$ whose fiber over closed points $\xi$ of $S$ is isomorphic to $\phi_d^{-1}(X_\xi)$, where by $X_{\xi}$ we mean the fiber of $f$ over $\xi$.
\end{defi}

The following is an immediate consequence of the previous Proposition.

\begin{cor}\label{functorialneron}
The Deligne-Mumford stack $\overline{\mathcal P}_{d,g}^{Ner}$ gives a functorial way of getting compactifications of the relative Picard variety of degree $d$ for families of $d$-general curves in the sense of Definition \ref{functorial}.
\end{cor}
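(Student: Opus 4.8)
The plan is to produce the compactification as the base change of the universal stack $\overline{\mathcal P}_{d,g}^{Ner}$ along the moduli map of the family, and to extract its projectivity and its fibers from Proposition \ref{propdm} together with the properties of Caporaso's map $\phi_d$. The functoriality is then built into the construction, since every such compactification arises from the single stack $\overline{\mathcal P}_{d,g}^{Ner}$ by pullback.

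First I would set up the construction. Let $f:\mathcal X\rightarrow S$ be a family of $d$-general stable curves. Since every geometric fiber of $f$ is $d$-general, the moduli map $\mu_f:S\rightarrow \overline{\mathcal M}_g$ factors through the open substack $\overline{\mathcal M}_g^d$. I then set
\begin{equation*}
\overline P_f^d:=\overline{\mathcal P}_{d,g}^{Ner}\times_{\overline{\mathcal M}_g^d}S,
\end{equation*}
the fiber product along $\mu_f$ and the natural map $\overline{\mathcal P}_{d,g}^{Ner}\rightarrow \overline{\mathcal M}_g^d$. By Proposition \ref{propdm} the latter is strongly representable, so, $S$ being a scheme, $\overline P_f^d$ is a scheme, with a structural projection $p:\overline P_f^d\rightarrow S$.

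Next I would prove that $p$ is projective. On coarse moduli spaces (Example \ref{coarse}) the map $\overline{\mathcal P}_{d,g}^{Ner}\rightarrow \overline{\mathcal M}_g^d$ induces the restriction of $\phi_d$ to the $d$-general locus, $\phi_d:\overline P_{d,g}^{Ner}\rightarrow \overline M_g^d$; this is projective, being proper with quasi-projective source and separated target over $k$. Exactly as in the proof of Lemma \ref{rep}, base change along the induced map $S\rightarrow \overline M_g^d$ gives a scheme $P':=\overline P_{d,g}^{Ner}\times_{\overline M_g^d}S$, which is projective over $S$ as a base change of $\phi_d$, together with a finite (hence projective) morphism $\rho:\overline P_f^d\rightarrow P'$. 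Composing $\rho$ with $P'\rightarrow S$ shows that $p$ is projective.

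Finally I would identify the fibers, which is where $d$-generality is essential and which I expect to be the only real point beyond formal bookkeeping. For a closed point $\xi\in S$ with fiber $X_\xi:=f^{-1}(\xi)$, base change gives $p^{-1}(\xi)\cong \overline{\mathcal P}_{d,g}^{Ner}\times_{\overline{\mathcal M}_g^d}\mathrm{Spec}\,k$ over the point $[X_\xi]$ of the stack. Because $\overline{\mathcal P}_{d,g}^{Ner}\rightarrow \overline{\mathcal M}_g^d$ is representable, the automorphisms of any object inject into those of its image, so after fixing the curve $X_\xi$ no nontrivial automorphisms survive; thus $p^{-1}(\xi)$ is a fine parameter space for balanced degree $d$ line bundles on quasistable curves with stable model $X_\xi$. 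Since $X_\xi$ is $d$-general, all such line bundles are stably balanced and the corresponding Hilbert points are GIT-stable, so this parameter space is precisely the geometric quotient giving the fiber $\phi_d^{-1}(X_\xi)=\overline P_{d,X_\xi}$; equivalently, the finite map $\rho$ restricts to an isomorphism on the fiber over $\xi$. Combining the three steps, $\overline P_f^d$ is a projective $S$-scheme whose fiber over each closed point $\xi$ is isomorphic to $\phi_d^{-1}(X_\xi)$, i.e.\ a compactification of the relative degree $d$ Picard variety in the sense of Definition \ref{functorial}; since it is obtained from $\overline{\mathcal P}_{d,g}^{Ner}$ by base change, this construction is functorial.
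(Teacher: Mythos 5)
Your proposal is correct and follows essentially the same route as the paper, which offers no separate argument and simply declares the corollary an immediate consequence of Proposition \ref{propdm}; your three steps (scheme-ness of the base change by strong representability, projectivity by the mechanism of Lemma \ref{rep} and Example \ref{coarse}, fibers by geometricity of the GIT quotient on $U_d$) are exactly the intended unwinding. The only caveat --- which your write-up shares with the paper's own Definition \ref{functorial} rather than introduces --- is that for a fiber $X_\xi$ with nontrivial automorphisms the scheme $p^{-1}(\xi)$ is the fine parameter space you describe, and this coincides with the GIT fiber $\phi_d^{-1}(X_\xi)$ only up to the (possibly nontrivial) action of $\mathrm{Aut}(X_\xi)$, a point the paper itself glosses over.
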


\begin{rem}\label{functorialcap}\upshape{
Denote by
\begin{equation}\label{capstack}
 \overline{\mathcal P}_{d,g}:=[H_d/G]
\end{equation}
the quotient stack of the action of $G=PGL(r+1)$ in $H_d$.
Then, if
$(d-g+1,2g-2)=1$, $\overline{\mathcal P}_{d,g}=\overline{\mathcal P}_{d,g}^{Ner}$ and all we said in this section was already proved in \cite{capneron} section 5 for $\overline{\mathcal P}_{d,g}$.}
\end{rem}

\subsection{N\'eron models of families of $d$-general curves}\label{neron1}

Recall that, given DVR (discrete valuation ring) $R$ with function field $K$ and an abelian variety $A_K$ over $K$, the N\'eron model of $A_K$, $N(A_K)$, is a smooth model of $A_K$ over $B=$Spec$R$ defined by the following universal property (cf. \cite{BLR} Definition 1): for every smooth scheme $Z$ over $B$ with a map $u_K:Z_K\rightarrow A_Z$ of its generic fiber, there exists an unique extension of $u_K$ to a $B$-morphism $u:Z\rightarrow N(A_K)$. Note that $N(A_K)$ may fail to be proper over $B$ but it is always separated.

Let $f:\mathcal X\rightarrow B$ be a family of stable curves with $\mathcal X$ nonsingular. Denote by $X_k$ the closed fiber of the family and by $\mathcal X_K$ its generic fiber. The question is how to construct the N\'eron model of the Picard variety Pic$^d\mathcal X_K$ in a functorial way over $\overline{\mathcal M}_g$. Even if it is natural to look at the Picard scheme (of degree $d$) of the family, Pic$^d_f\rightarrow B$, which is smooth and has generic fiber equal to Pic$^d\mathcal X_K$, it turns out to be non satisfactory since it fails to be separated over $B$ if the closed fiber $X_k$ of $f$ is reducible.

Denote by $\mathcal P_{d,g}^{Ner}$ the quotient stack $[U_d^{st}/G]$, where $U_d^{st}$ is the locus of points in $U_d$ parametrize $d$-general stable curves.

It is clear that the statement of Proposition \ref{propdm} holds for $\mathcal P_{d,g}^{Ner}$ since $U_d^{st}$ is a $G$-invariant subscheme of $U_d$.
So, given a family of $d$-general stable curves $f:\mathcal X\rightarrow B$, the fiber product $\mathcal P_{d,g}\times_{\overline{\mathcal M}_g^d}B$, where $B\rightarrow \overline{\mathcal M}_g$ is the moduli map associated to the family $f$, is a scheme over $B$, denoted by $P_f^d$.

\begin{equation*}
\xymatrix{
{ P_f^d} \ar[d] \ar[r] & {{\mathcal P}_{d,g}^{Ner}} \ar[d] \\
{B} \ar[r] & {\overline{\mathcal M}_g^d}
}
\end{equation*}
Suppose $\mathcal X$ is regular. Then, from \cite{capneron}, Theorem 6.1, we get that $P_f^d\cong N($Pic$^d\mathcal X_K)$.

\subsection{Combinatorial description of $d$-general curves in $\overline M_g$}\label{dgeneraldesc}

Recall the notions of $d$-general and $d$-special curve from
Definition \ref{dgeneral}.

Following the notation of \cite{capest}, we will denote by $\Sigma_g^d$ the locus in $\overline M_g$ of $d$-special curves. So, $\Sigma_g^d$ consists of stable curves $X$ of genus $g$ such that $\tilde B_X^d\setminus B_X^d\ne \emptyset$ (see Definition \ref{balanced}). In particular, $\Sigma_g^d$ is contained in the closed subset of $\overline M_g$ consisting of reducible curves. Let us also denote by $\overline{M}_g^d$ the locus of $d$-general genus $g$ stable curves (so $\Sigma_g^d\cup \overline{M}_g^d=\overline M_g$, for all $d\in \Z$).

 From \cite{cap}, Lemma 6.1, we know that $\overline{M}_g^d$ is the image under $\phi_d$ of $U_d$, so it is an open subset of $\overline M_g$.

Recall that a \textit{vine curve} is a curve with two smooth irreducible components meeting in an arbitrary number of nodes. The closure in $\overline M_g$ of the vine curves of genus $g$ is precisely the locus of reducible curves.

In Proposition \ref{dgeneralprop}, we give a geometric description of $\Sigma_g^d$.

\begin{ex}\upshape{
Let $d=1$. From \cite{capest}, Prop. 3.15 we know that, if $g$ is odd, $\Sigma_g^1$ is empty and that if $g$ is even, $\Sigma_g^1$ is the closure in $\overline M_g$ of the locus of curves $X=C_1\cup C_2$, with $C_1$ and $C_2$ smooth of the same genus and $\sharp (C_1\cap C_2)=k$ odd.
}
\end{ex}

Observe that, from the above example, we get that $\Sigma_g^1$ is the closure in $\overline M_g$ of the $1$-special vine curves of genus $g$. In what follows we will see that this is always the case for any degree.

\begin{lem}\label{vine}
Let $d$ be an integer greater or equal to $1$. Then $\Sigma_g^d$ is the closure in $\overline M_g$ of the locus of $d$-special vine curves.
\end{lem}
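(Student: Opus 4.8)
The statement asserts that $\Sigma_g^d$ equals the closure in $\overline M_g$ of the locus of $d$-special vine curves. Since $\Sigma_g^d$ is closed in $\overline M_g$ and contains every $d$-special vine curve, one inclusion ($\supseteq$) is immediate. The content of the lemma is the reverse inclusion: every $d$-special stable curve lies in the closure of the $d$-special vine curves. My plan is to reduce the $d$-speciality of an arbitrary reducible stable curve $X$ to the existence of a single connected proper subcurve witnessing the failure of stable balancedness, and then to deform $X$ into a vine curve keeping that witness alive.

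\medskip

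First I would unwind the definitions. A stable curve $X$ is $d$-special precisely when $\tilde B_X^d\setminus B_X^d\neq\emptyset$, i.e.\ when there is a balanced but not stably balanced multidegree. By Remark \ref{rembalanced}(C), for a stable curve this means there exists a connected proper subcurve $Z\subsetneq X$ with a balanced multidegree $\underline d$ satisfying $\mathrm{deg}_ZL=m_Z(d)$ (so that $Z'$ is \emph{not} a union of exceptional components, $X$ being stable and hence having no exceptional components at all). Rewriting $m_Z(d)=\frac{d-g+1}{2g-2}w_Z+g_Z-1$ as in the proof of Remark \ref{rembalanced}(D), the condition is the integrality
\begin{equation*}
\frac{d-g+1}{2g-2}\,w_Z\in\Z.
\end{equation*}
Thus $X$ is $d$-special if and only if it carries a connected proper subcurve $Z$ with $\frac{d-g+1}{2g-2}w_Z\in\Z$; note $w_Z=\mathrm{deg}_Z\omega_X$ depends only on the arithmetic genus $g_Z$ and the number $k_Z$ of nodes where $Z$ meets its complement.

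\medskip

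Second, given such a pair $(X,Z)$, I would construct a one-parameter deformation of $X$ whose general fiber is a vine curve $Y=C_1\cup C_2$ that is still $d$-special, with $[X]$ in the closure of the corresponding family. The natural choice is to collapse the partition $X=Z\cup Z'$: take $C_1$ a smooth curve of genus $g_Z$ and $C_2$ a smooth curve of genus $g_{Z'}$, glued at $k_Z$ points, so that $Y$ is stable of the same genus $g$ and $\mathrm{deg}_{C_1}\omega_Y=2g_Z-2+k_Z=w_Z$. Then the integrality condition $\frac{d-g+1}{2g-2}w_Z\in\Z$ transports verbatim to $Y$, so $Y$ is a $d$-special vine curve by the criterion above. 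To see that $[X]$ lies in the closure of such $[Y]$, I would smooth the nodes of $X$ internal to $Z$ and to $Z'$ while preserving the $k_Z$ nodes along $Z\cap Z'$: a standard simultaneous-deformation / versal-deformation argument in $\overline M_g$ shows these nodes can be independently smoothed, yielding a family over a base with special fiber $X$ and general fiber exactly a vine curve $Y$ of the above type. Since $w_Z$ is unchanged along the family, the whole family (including the general fiber) is $d$-special, placing $[X]$ in the closure of the $d$-special vine locus.

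\medskip

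The main obstacle I anticipate is the deformation-theoretic step: one must check that smoothing the internal nodes of $Z$ and $Z'$ while keeping the nodes of $Z\cap Z'$ is an \emph{allowed} (and non-obstructed) direction in the versal deformation space of $X$, so that the general fiber is genuinely a two-component vine curve rather than something more degenerate, and that the family stays inside $\overline M_g$. This is where I would invoke the local structure of $\overline M_g$ near a nodal curve, whose deformation space decomposes node-by-node, allowing each node to be smoothed or preserved independently; the genera $g_Z$, $g_{Z'}$ may of course be taken arbitrary (and the components of any genus by further smoothing), which is harmless since only $w_Z$ matters. A subsidiary point to verify is that $Z$ being connected guarantees $C_1$ is a genuine irreducible component of the limit vine curve, so that the witness subcurve is preserved under the degeneration; connectedness of $Z$ (hypothesized in the Basic Inequality) is exactly what makes this clean.
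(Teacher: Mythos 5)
Your outline has the same skeleton as the paper's proof: extract a connected proper subcurve $Z$ with $\mbox{deg}_ZL=m_Z(d)$ for some balanced $L$, note the integrality $\frac{d-g+1}{2g-2}w_Z\in\Z$, build a vine curve with the same invariants $(w_Z,k_Z)$, and realize $X$ as a degeneration of such vine curves. But there is a genuine gap: you never ensure that the complement $Z^\prime=\overline{X\setminus Z}$ is \emph{connected}, and your construction silently assumes it is. If $Z^\prime$ has connected components $Z_1^\prime,\dots,Z_s^\prime$ with $s\ge 2$, two things break. First, ``a smooth curve of genus $g_{Z^\prime}$'' need not exist: for disconnected $Z^\prime$ one has $g_{Z^\prime}=\sum_i g_{Z_i^\prime}-s+1$, which can be negative. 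Second, and more seriously, the versal-deformation step fails: smoothing all nodes internal to $Z$ and to $Z^\prime$ while keeping the $k_Z$ nodes of $Z\cap Z^\prime$ produces a general fiber with $s+1\ge 3$ components (the dual graph of the general fiber is obtained from that of $X$ by contracting exactly the smoothed edges, so each $Z_i^\prime$ survives as a separate component), not a vine curve. Indeed $X$ need not lie in the closure of the locus of vine curves $C_1\cup C_2$ with $g(C_1)=g_Z$, $g(C_2)=g_{Z^\prime}$ and $\sharp(C_1\cap C_2)=k_Z$ at all: any such degeneration would give a contraction of the dual graph of $X$ onto a two-vertex graph with $k_Z$ edges, which is impossible when $s\ge 2$.

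This connectedness is precisely where the paper does its real work. It picks the witness $Z$ with $w_Z$ \emph{maximal} among connected proper subcurves satisfying $\mbox{deg}_ZL=m_Z(d)$, and shows this forces $Z^\prime$ to be connected: one has $\mbox{deg}_{Z^\prime}L=d-m_Z(d)=M_{Z^\prime}(d)$, so if $Z^\prime$ were disconnected each component $Z_i^\prime$ would have to attain its own maximum $M_{Z_i^\prime}(d)$ (otherwise the sum falls short), and then $W:=Z\cup Z_1^\prime$ would be a connected proper subcurve with $\mbox{deg}_WL=m_W(d)$ and $w_W=w_Z+w_{Z_1^\prime}>w_Z$, contradicting maximality. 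With that in hand, your smoothing argument is essentially the paper's concluding sentence and goes through. A smaller point: your claimed equivalence ``$X$ is $d$-special iff it has a connected proper subcurve $Z$ with $\frac{d-g+1}{2g-2}w_Z\in\Z$'' should be stated with care, since the ``if'' direction for arbitrary stable curves is not available at this stage --- it is Remark \ref{special}, a consequence of this very lemma together with Remark \ref{rembalanced}(D); fortunately you only use it for vine curves, where it is exactly Remark \ref{rembalanced}(D).
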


\begin{proof}
Let $X$ be a genus $g$ $d$-special curve. As $X$ is stable, using Remark \ref{rembalanced} (C),
this means that there is a connected proper subcurve $Z$ of $X$ and a balanced line bundle $L$ on $X$ such that deg$_ZL=m_Z(d)$.

So, let $Z$ be a connected proper subcurve of $X$ such that deg$_ZL=m_Z(d)$ and such that $w_Z$ is maximal among the subcurves satisfying this relation.
The complementary curve of $Z$ in $X$, $Z^\prime$ must be such that
$$\mbox{deg}_{Z^\prime}L=d-\mbox{deg}_ZL=d-m_Z(d)=M_{Z^\prime}(d).$$
Let us see that $Z^\prime$ is connected as well.

By contradiction, suppose $Z^\prime=Z_1^\prime\cup\dots\cup Z_s^\prime$ is a union of connected components with $s>1$. As deg$_{Z^\prime}L=M_{Z^\prime}(d)$, also each one of its connected components $Z_i^\prime$, $i=1,\dots ,s$, must be such that deg$_{Z_i^\prime}L= M_{Z_i^\prime}(d)$.
In fact, suppose one of them, say $Z_j^\prime$, is such that deg$_{Z_j^\prime}L<M_{Z_j^\prime}(d)$. Then,
$$\mbox{deg}_{Z^\prime}L=\sum_{i=1}^s\mbox{deg}_{Z_i^\prime}L<\sum_{i=1}^s\frac{dw_{Z_i^\prime}}{2g-2}+\frac{k_{Z_i^\prime}}2=\frac{dw_{Z^\prime}}{2g-2}+\frac{k_{Z^\prime}}2=M_{Z^\prime}(d)$$
leading us to a contradiction. Note that the sum of the $k_{Z_i^\prime}$'s is $k_{Z^\prime}$ because, being the $Z_i^\prime$'s the connected components of $Z^\prime$, they do not meet each other.

Now, let us consider $W:=Z\cup Z_1^\prime$. As $s>1$, $W$ is a connected proper subcurve of $X$ with $w_W=w_Z+w_{Z_1^\prime}>w_Z$. Indeed, as $X$ is stable,
\begin{equation}\label{exc}
0<w_Y<2g-2
\end{equation}
for every proper subcurve $Y$ of $X$, since there are no exceptional components. Moreover,
$$\mbox{deg}_WL=\mbox{deg}_ZL+\mbox{deg}_{Z_1^\prime}L=\frac{dw_Z}{2g-2}-\frac{k_Z}2+\frac{dw_{Z_1^\prime}}{2g-2}+\frac{k_{Z_1^\prime}}2=\frac{dw_W}{2g-2}-\frac{k_W}{2}$$
because, being $Z_1^\prime$ a connected component of $Z^\prime$, we have that $$k_Z-k_{Z_1^\prime} = k_Z - \sharp(Z\cap Z_1^\prime)=k_W.$$ So, $W$ is a connected proper subcurve of $X$ with deg$_WL=m_W(d)$ and with $w_W>w_Z$.
This way, we achieved a contradiction by supposing that $Z^\prime$ is not connected.

As both $Z$ and $Z^\prime$ are limits of smooth curves and $\Sigma_g^d$ is closed in $\overline M_g$, then $X$ lies in the closure in $\overline M_g$ of the locus of genus $g$ $d$-special vine curves.
\end{proof}

Given integers $d$ and $g$, we will use the following notation to indicate greatest common divisor
$$G_d:=(d-g+1,2g-2).$$
From \cite{cap}, we know that $\Sigma_g^d$ is a proper closed subset of $\overline M_g$ and that $\Sigma_g^d=\emptyset$ if and only if $G_d=1$ (see Prop. 6.2 of loc. cit.).

\begin{rem}\label{special}\upshape{
From Lemma \ref{vine} and Remark \ref{rembalanced}(D) we conclude that a stable curve $X$ is $d$-special if and only if there is a connected proper subcurve $Z$ of $X$ such that $\overline{X\setminus Z}$ is connected and $\frac{2g-2}{G_d}$ divides $w_Z$.}
\end{rem}

\begin{rem}\upshape{
If $G_d=2g-2$, which means that $d\equiv (g-1)($mod $2g-2)$, an immediate consequence of the previous Remark is that all reducible curves are $(g-1)$-special. This is the opposite situation to the case $G_d=1$.}
\end{rem}

From Remark \ref{special} we see that $\Sigma_g^d$ depends only on $G_d$. This is evident in the following proposition, where we give a geometric description of $\Sigma_g^d$.

\begin{prop}\label{dgeneralprop}
Let $d$ be an integer greater or equal to $1$. Then $\Sigma_g^d$ is the closure in $\overline M_g$ of vine curves $X=C_1\cup C_2$ such that
$$\frac{2g-2}{G_d}\mid w_{C_1}.$$
More precisely, $\Sigma_g^d$ is the closure in $\overline M_g$ of the following vine curves:\\
given integers $m$ and $k$ with
$$1\le m< G_d$$
and
$$1\le k\le \mbox{min}\{\frac{2g-2}{G_d}m+2,2g-\frac{2g-2}{G_d}m\}\mbox{, }k\equiv \frac{2g-2}{G_d}m(\mbox{mod }2),$$
then
$X=C_1\cup C_2$, with $\sharp(C_1\cap C_2)=k$,
and
\begin{itemize}
\item $g(C_1)=\frac{g-1}{G_d}m-\frac{k}{2}+1$;
\item $g(C_2)=g-\frac{g-1}{G_d}-\frac{k}{2}$.
\end{itemize}
\end{prop}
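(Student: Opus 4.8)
The plan is to reduce everything to vine curves and then to translate the divisibility criterion of Remark \ref{special} into the explicit numerics of the statement via the adjunction formula \eqref{omegaZ}. By Lemma \ref{vine} we already know that $\Sigma_g^d$ is the closure in $\overline M_g$ of the locus of $d$-special vine curves, so it suffices to decide exactly which vine curves $X=C_1\cup C_2$ are $d$-special and to record their discrete invariants. Since $C_1$ and $C_2$ are smooth irreducible, each is connected and is the complement of the other, so the only connected proper subcurves of $X$ with connected complement are $C_1$ and $C_2$. Applying Remark \ref{special} with $Z=C_1$, and noting that $\frac{2g-2}{G_d}\mid w_{C_1}$ if and only if $\frac{2g-2}{G_d}\mid w_{C_2}$ (because $w_{C_1}+w_{C_2}=2g-2$ and $\frac{2g-2}{G_d}\mid 2g-2$), I would conclude that $X$ is $d$-special precisely when $\frac{2g-2}{G_d}\mid w_{C_1}$. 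This already gives the first, cleaner formulation of the proposition.

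Next I would make the parametrization explicit. Writing $w_{C_1}=\frac{2g-2}{G_d}m$, the inequality $0<w_{C_1}<2g-2$ from \eqref{exc} (valid since a stable curve has no exceptional components) forces $1\le m<G_d$. Setting $\sharp(C_1\cap C_2)=k$, so that $k_{C_1}=k_{C_2}=k$ and $g=g(C_1)+g(C_2)+k-1$, the adjunction formula \eqref{omegaZ} reads $w_{C_1}=2g(C_1)-2+k$, whence $g(C_1)=\frac{g-1}{G_d}m-\frac{k}{2}+1$ and, symmetrically from $w_{C_2}=\frac{2g-2}{G_d}(G_d-m)$, $g(C_2)=g-\frac{g-1}{G_d}m-\frac{k}{2}$. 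The requirement that these two genera be non-negative integers then produces exactly the stated constraints on $k$: integrality forces $k\equiv\frac{2g-2}{G_d}m\pmod 2$, while $g(C_1)\ge 0$ gives $k\le\frac{2g-2}{G_d}m+2$ and $g(C_2)\ge 0$ gives $k\le 2g-\frac{2g-2}{G_d}m$, together with $k\ge 1$ for connectedness, yielding the min in the statement.

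The step I expect to need the most care is verifying that stability imposes no further constraint and discards none of the listed curves. The only danger is a rational component with too few nodes. If $g(C_1)=0$ then $k=\frac{2g-2}{G_d}m+2$ is forced, and this is $\ge 3$ since $\frac{2g-2}{G_d}m\ge 1$; if $g(C_2)=0$ then $k=2g-\frac{2g-2}{G_d}m$, which is at least $2+\frac{2g-2}{G_d}\ge 3$ because $m\le G_d-1$ gives $\frac{2g-2}{G_d}m\le 2g-2-\frac{2g-2}{G_d}$. Hence every discrete type in the list is stable, and conversely each such type is realized by an actual stable vine curve, since smooth curves of any prescribed non-negative genus exist. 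Taking closures in $\overline M_g$ and appealing once more to Lemma \ref{vine} then completes the argument.

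I would also flag, as a sanity check worth recording during the write-up, that the formula obtained for $g(C_2)$ carries the factor $m$, i.e.\ $g(C_2)=g-\frac{g-1}{G_d}m-\frac{k}{2}$; this is what adjunction forces, and it reduces to the displayed expression only in the case $m=1$.
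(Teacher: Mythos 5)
Your proof is correct and follows essentially the same route as the paper's: reduce to vine curves via Lemma \ref{vine}, translate $d$-speciality into the divisibility condition via Remark \ref{special}, then use adjunction and non-negativity of the component genera to obtain the explicit numerical list, finishing with the stability check (which you carry out explicitly where the paper only asserts it). Your flagged discrepancy is real and worth keeping: the factor $m$ is indeed forced in $g(C_2)=g-\frac{g-1}{G_d}m-\frac{k}{2}$, and the paper's statement and proof (which also contain the slip $g=g(C_1)-g(C_2)+k-1$ in place of $g=g(C_1)+g(C_2)+k-1$) drop it, so you have caught a typo in the source rather than made an error yourself.
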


\begin{proof}
The first part of the proposition is an immediate consequence of Lemma \ref{vine} and Remark \ref{special}.

Now, let $X$ be a $d$-special genus $g$ vine curve $X=C_1\cup C_2$ with $\sharp(C_1\cap C_2)=k$.
From Remark \ref{special} we know that there exists an integer $m$ such that
$$m\frac{2g-2}{G_d}=w_{C_1}$$
with $1\le m<G_d$ because, as $X$ is a stable curve, $\frac{w_{C_1}}{2g-2}$ must be smaller than $1$.

As $w_{C_1}=2g(C_1)-2+k$, we get that $k\equiv \frac{2g-2}{G_d}m($mod $2)$ and that
$$g(C_1)=\frac{g-1}{G_d}m-\frac{k}{2}+1.$$
Now, as $g=g(C_1)-g(C_2)+k-1$, we get that
$$g(C_2)=g-\frac{g-1}{G_d}-\frac{k}{2}.$$
As $g(C_1)$ and $g_(C_2)$ must be greater or equal than $0$, we get,
respectively, that
$$k\le \frac{2g-2}{G_d}m+2\mbox{ and }k\le 2g-\frac{2g-2}{G_d}m.$$
It is easy to see that if $g(C_1)$ or $g(C_2)$ are equal to $0$ then $k\ge 3$. So, the vine curves we constructed are all stable.
\end{proof}

\begin{rem}\upshape{
Since by smoothing a vine curve in any of its nodes we get an irreducible curve, we see that the above set of \av generators\av of $\Sigma_g^d$ is minimal in the sense that none of them lies in the closure of the others.}
\end{rem}

The dependence of $\Sigma_g^d$ on $G_d$ gets even more evident in the following proposition.

\begin{prop} \label{strat}
For every $d, d^\prime \in \Z$, $G_d|G_{d^\prime}$ if and only if $\Sigma_g^d\subset\Sigma_g^{d^\prime}$.
\end{prop}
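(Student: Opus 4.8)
The plan is to reduce everything to the purely numerical criterion supplied by Remark \ref{special}. Writing $n_d := \frac{2g-2}{G_d}$ and $n_{d'} := \frac{2g-2}{G_{d'}}$, both are positive divisors of $2g-2$, and for two divisors $a,b$ of a positive integer $N$ one has $a \mid b \iff N/b \mid N/a$; applied to $N = 2g-2$ this gives $G_d \mid G_{d'} \iff n_{d'} \mid n_d$. Thus it suffices to prove the equivalence $n_{d'} \mid n_d \iff \Sigma_g^d \subseteq \Sigma_g^{d'}$. By Remark \ref{rembalanced}(E) both $G_d$ and the locus $\Sigma_g^d$ depend only on the class of $d$ modulo $2g-2$, so I may assume $d, d' \ge 1$ and use Lemma \ref{vine}, Proposition \ref{dgeneralprop} and Remark \ref{special} freely. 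Recall that, by Remark \ref{special}, a stable curve $X$ lies in $\Sigma_g^d$ precisely when it has a connected proper subcurve $Z$ with connected complement and $n_d \mid w_Z$.

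For the forward implication, assume $n_{d'} \mid n_d$ and take any $X \in \Sigma_g^d$. Choose a connected proper subcurve $Z$ with connected complement and $n_d \mid w_Z$ as above. Then $n_{d'} \mid n_d \mid w_Z$, so the same subcurve $Z$ witnesses $X \in \Sigma_g^{d'}$ by Remark \ref{special}. Hence $\Sigma_g^d \subseteq \Sigma_g^{d'}$; this direction is immediate.

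The content is in the converse. Assume $\Sigma_g^d \subseteq \Sigma_g^{d'}$. If $\Sigma_g^d = \emptyset$ then $G_d = 1$ divides $G_{d'}$ and there is nothing to prove, so assume $G_d > 1$, equivalently $1 \le n_d < 2g-2$. I would now exhibit a single test curve in $\Sigma_g^d$ on which the inclusion becomes sharp: by Proposition \ref{dgeneralprop}, taking $m = 1$ (so that $w_{C_1} = n_d$), there is a stable vine curve $X = C_1 \cup C_2$ with $\sharp(C_1 \cap C_2) = k$ for a suitable $k$, having $w_{C_1} = n_d$ and hence $w_{C_2} = (2g-2) - n_d$; such an $X$ lies in $\Sigma_g^d$. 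The key structural observation is that the only connected proper subcurves of a vine curve with connected complement are its two (irreducible, hence connected) components $C_1$ and $C_2$. Since $X \in \Sigma_g^d \subseteq \Sigma_g^{d'}$, Remark \ref{special} forces $n_{d'} \mid w_{C_1} = n_d$ or $n_{d'} \mid w_{C_2} = (2g-2) - n_d$; because $n_{d'} \mid 2g-2$, the second possibility also yields $n_{d'} \mid n_d$. In either case $n_{d'} \mid n_d$, i.e. $G_d \mid G_{d'}$, completing the proof.

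I expect the main obstacle to be precisely the choice of test curve in the converse: one must produce a curve in $\Sigma_g^d$ whose list of admissible subcurves is as small as possible, so that membership in $\Sigma_g^{d'}$ cannot be achieved \av for free\uv{} through some unrelated subcurve. The minimal vine curve with $w_{C_1} = n_d$ is designed for this, since its only subcurve data are $n_d$ and $(2g-2)-n_d$, both of which collapse to the single divisibility $n_{d'} \mid n_d$. The remaining care is to check, via the numerical constraints in Proposition \ref{dgeneralprop}, that such a stable vine curve genuinely exists for $m=1$ (a $k$ of the correct parity in the stated range, giving $g(C_1),g(C_2)\ge 0$ and stability, can always be chosen), which I would verify but not dwell on.
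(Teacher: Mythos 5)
Your proof is correct and takes essentially the same route as the paper: the easy implication is read off from Remark \ref{special}, and the converse is settled by the very same test curve --- a stable vine curve $X=C_1\cup C_2$ with $w_{C_1}=\frac{2g-2}{G_d}$ --- whose only proper subcurves are its two components, so that membership in $\Sigma_g^{d^\prime}$ forces $\frac{2g-2}{G_{d^\prime}}\mid\frac{2g-2}{G_d}$, i.e.\ $G_d\mid G_{d^\prime}$. The only differences are cosmetic: the paper argues by contrapositive and builds the curve by hand (choosing $\delta=1$ or $2$ nodes according to the parity of $\frac{2g-2}{G_d}$), while you argue directly and get existence from Proposition \ref{dgeneralprop} with $m=1$; the parity check you defer is exactly the paper's choice $\delta\in\{1,2\}$, which indeed always lies in the admissible range.
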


\begin{proof}
That $G_d|G_{d^\prime}$ implies that $\Sigma_g^d\subset\Sigma_g^{d^\prime}$ is immediate from Remark \ref{special}.

Now, suppose $\Sigma_g^d\subset\Sigma_g^{d^\prime}$. If $G_d=1$ then obviously $G_d|G_{d^\prime}$. For $G_d\ne 1$ we will conclude by contradiction that $G_d|G_{d^\prime}$. So, suppose $G_{d^\prime}\nmid G_d$. Then, also $\frac{2g-2}{G_d^\prime}\nmid \frac{2g-2}{G_d}$. We will show that there exists a stable curve $X$ consisting of two smooth irreducible components $C_1$ and $C_2$ meeting in $\delta$ nodes ($\delta\ge 1$) which is $d$-special but not $d^\prime$-special.

Take $X$ such that $w_{C_1}=\frac{2g-2}{G_d}$. If such a curve exists and is stable then we are done because $X$ will clearly be $d$-special and not $d^\prime$-special. In fact, by construction, $\frac{2g-2}{G_{d^\prime}}$ does not divide $w_{C_1}$ and $\frac{2g-2}{G_{d^\prime}}$ will not divide $w_{C_2}$ too because $w_{C_2}=(2g-2)-\frac{2g-2}{G_d}$.

So, $X$ must be such that
\begin{itemize}
\item $g(C_1)=\frac{g-1}{G_d}+1-\frac{\delta}{2}$
\item $g(C_2)=g-\frac{g-1}{G_d}-\frac{\delta}{2}$
\item $\delta\ge 1$ and $\delta\equiv\frac{2g-2}{G_d}($mod $2)$.
\end{itemize}
As $g(C_i)$ must be greater or equal than $0$ and the curve $X$ must be stable, we must check if such a construction is possible.

So, if $\frac{2g-2}{G_d}\equiv 1($mod $2)$, take $\delta=1$. Then we will have that $g(C_1)=\frac{g-1}{G_d}+\frac{1}{2}$ and $g(C_2)=g-\frac{g-1}{G_d}-\frac{1}{2}$, which are both greater than $1$ because we are considering $G_d> 1$.

If $\frac{2g-2}{G_d}\equiv 0($mod $2)$, take $\delta=2$. Then we will have that $g(C_1)=\frac{g-1}{G_d}$ and $g(C_2)=g-\frac{g-1}{G_d}-1$, again both greater than $1$. We conclude that $X$ is a stable curve.
\end{proof}

The following is immediate.

\begin{cor}
For all $d$ and $d^\prime$, $\Sigma_g^d=\Sigma_g^{d^\prime}$ if and only if $G_d=G_{d^\prime}$.
\end{cor}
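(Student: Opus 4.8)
The plan is to deduce this statement directly from Proposition \ref{strat}, which matches the divisibility relation among the integers $G_d$ with the inclusion relation among the loci $\Sigma_g^d$. The essential observation is that both relations are orderings of partially ordered sets: divisibility is a partial order on the positive integers and inclusion is a partial order on the subsets of $\overline M_g$, and in each case equality of two elements is equivalent to mutual comparability in both directions. So the corollary is purely the formal statement that an order isomorphism (or at least an order-preserving correspondence in both directions) sends equal elements to equal elements.

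Concretely, I would first record the easy implication: if $G_d=G_{d^\prime}$, then trivially $G_d\mid G_{d^\prime}$ and $G_{d^\prime}\mid G_d$, so two applications of Proposition \ref{strat} give $\Sigma_g^d\subset\Sigma_g^{d^\prime}$ and $\Sigma_g^{d^\prime}\subset\Sigma_g^d$, hence $\Sigma_g^d=\Sigma_g^{d^\prime}$. Conversely, assuming $\Sigma_g^d=\Sigma_g^{d^\prime}$, I would read off both inclusions $\Sigma_g^d\subset\Sigma_g^{d^\prime}$ and $\Sigma_g^{d^\prime}\subset\Sigma_g^d$ and invoke Proposition \ref{strat} in each direction to obtain $G_d\mid G_{d^\prime}$ and $G_{d^\prime}\mid G_d$. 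Since $G_d$ and $G_{d^\prime}$ are greatest common divisors and $2g-2>0$ forces them to be positive integers, mutual divisibility then yields $G_d=G_{d^\prime}$ by antisymmetry of divisibility on the natural numbers.

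There is no genuine obstacle here: all the geometric and combinatorial content has already been packaged into Proposition \ref{strat}, and what remains is a formal consequence of the antisymmetry of the two partial orders. The only point worth a word of care is the passage from mutual divisibility to equality, which uses that $G_d$ and $G_{d^\prime}$ are nonnegative integers rather than arbitrary integers; this is automatic from their definition as greatest common divisors, so the argument is complete.
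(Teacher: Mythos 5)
Your proof is correct and is exactly what the paper intends: the paper simply labels this corollary ``immediate'' from Proposition \ref{strat}, and your argument spells out that immediacy, namely antisymmetry of divisibility on positive integers and of inclusion of subsets, applied in both directions of the proposition. Nothing is missing.
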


For each positive divisor $M$ of $2g-2$ there is an integer $d=M+g-1$ such that $G_d=M$. So, for each such $M$, we can define
$$\Sigma_{g,M}:=\Sigma_g^d \mbox{  and  } \overline{M}_g^M=\overline M_g\setminus \Sigma_{g,M}.$$

For example, $\overline{M}_g^{2g-2}$ consists of irreducible curves and $\overline{M}_{g}^1=\overline{M}_g$.

The following is now immediate.

\begin{prop}\label{lattice}
The open subsets $\overline{M}_g^M$ associated to the positive
divisors $M$ of $2g-2$, form a lattice of open subschemes of
$\overline M_g$ such that $\overline{M}_g^M\subset
\overline{M}_g^{M^\prime}$ if and only if $M^\prime|M$.
\end{prop}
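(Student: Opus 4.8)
The plan is to reduce everything to the correspondence established in Proposition \ref{strat}, viewing the assignment $M\mapsto \overline{M}_g^M$ as an order-reversing dictionary between the divisors of $2g-2$ and our family of open subschemes.

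First I would record the inclusion criterion explicitly. By definition $\overline{M}_g^M=\overline{M}_g\setminus \Sigma_{g,M}$, where $\Sigma_{g,M}=\Sigma_g^d$ for any $d$ with $G_d=M$; such a $d$ exists (e.g. $d=M+g-1$) precisely because $M\mid 2g-2$. Complementation inside $\overline{M}_g$ reverses inclusions, so $\overline{M}_g^M\subset \overline{M}_g^{M^\prime}$ if and only if $\Sigma_{g,M^\prime}\subset \Sigma_{g,M}$. Picking $d,d^\prime$ with $G_d=M$ and $G_{d^\prime}=M^\prime$, Proposition \ref{strat} identifies the latter with $G_{d^\prime}\mid G_d$, that is, with $M^\prime\mid M$. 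This already gives the stated equivalence $\overline{M}_g^M\subset \overline{M}_g^{M^\prime}\Leftrightarrow M^\prime\mid M$.

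Next I would upgrade this to the lattice statement. The Corollary above shows that $M\mapsto \Sigma_{g,M}$, hence $M\mapsto \overline{M}_g^M$, is injective, so the family is in bijection with the set of positive divisors of $2g-2$; by the criterion just proved this bijection is an isomorphism from the positive divisors of $2g-2$, ordered by divisibility, onto the family ordered by the \emph{reverse} of inclusion. The poset of positive divisors of $2g-2$ under divisibility is a lattice, with meet $\gcd$ and join $\mathrm{lcm}$, and both $\gcd(M,M^\prime)$ and $\mathrm{lcm}(M,M^\prime)$ again divide $2g-2$, so these operations stay inside the indexing set. Transporting them through the order-reversing bijection makes the family a lattice in which the join of $\overline{M}_g^M$ and $\overline{M}_g^{M^\prime}$ is $\overline{M}_g^{\gcd(M,M^\prime)}$ and their meet is $\overline{M}_g^{\mathrm{lcm}(M,M^\prime)}$, with inclusion governed exactly as in the statement.

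There is no serious obstacle: once Proposition \ref{strat} is in hand the argument is formal. The one point deserving care — and the only place a reader might stumble — is that the lattice operations are \emph{not} the naive intersection and union of the open subschemes $\overline{M}_g^M$ (which need not lie in the family), but the abstract greatest lower bound and least upper bound, which under the order-reversing dictionary become the lcm and gcd of the indices. Confirming these really are the meet and join amounts to the immediate check, via the inclusion criterion, that $\mathrm{lcm}(M,M^\prime)$ indexes the largest member contained in both and $\gcd(M,M^\prime)$ the smallest member containing both.
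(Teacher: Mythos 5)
Your proof is correct and takes exactly the route the paper intends: the paper states this proposition with no argument beyond ``The following is now immediate,'' and what it deems immediate is precisely your deduction — complementation reverses inclusions, Proposition \ref{strat} (together with the Corollary and the fact that every positive divisor $M$ of $2g-2$ is realized as $G_d$ for $d=M+g-1$) converts inclusions among the $\Sigma_{g,M}$ into divisibility of indices, and the divisor lattice of $2g-2$ is then transported, order-reversed, onto the family. Your closing remark that the meet and join are $\overline{M}_g^{\mathrm{lcm}(M,M^\prime)}$ and $\overline{M}_g^{\gcd(M,M^\prime)}$ rather than the naive intersection and union (which need not belong to the family) is accurate and makes explicit a point the paper leaves unsaid.
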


\section{Modular description of Balanced Picard stacks over $\overline{\mathcal M}_g$} \label{modular}

Suppose $(d-g+1,2g-2)=1$. Then $\overline{\mathcal M}_g^d=\overline{\mathcal M}_g$ and $\overline{\mathcal P}_{d,g}^{Ner}=\overline{\mathcal P}_{d,g}$ (see section \ref{dgeneraldesc}). Moreover, from \cite{capneron}, 5.10, we know that $\overline{\mathcal P}_{d,g}$ is the \av rigidification\uv in the sense of \cite{avc} (see section \ref{rigidification} below) of the category whose sections over a scheme $S$ are pairs $(f:\mathcal X\rightarrow S,\mathcal L)$ where $f$ is a family of quasistable curves of genus $g$ and $\mathcal L$ is a balanced line bundle on $\mathcal X$ of relative degree $d$. Arrows between such pairs are given by cartesian diagrams
\begin{equation*}
\xymatrix{
{\mathcal X} \ar[d]_{f} \ar[r]^{h} & {\mathcal X^\prime} \ar[d]^{f^\prime} \\
{S} \ar[r] & {S^\prime}
}
\end{equation*}
and an isomorphism $\mathcal L\cong h^*\mathcal L^\prime\otimes f^*M$, for some $M\in$ Pic $S$.

This description uses heavily the existence of Poincar\'e line bundles for families of quasistable curves, established in loc. cit., Lemma 5.5. However, this works only if $(d-g+1,2g-2)=1$.

In order to overcome this difficulty we will try to define the stack of line bundles of families of stable curves.

We will start by recalling the definition of \av Picard stack
associated to a morphism of schemes\uv. Roughly speaking, a
\textit{Picard stack} is a stack together with an \av addition\uv
operation which is both associative and commutative. The theory of
Picard stacks is developed by Deligne and Grothendieck on Section 1.4 of Expos\'e
XVIII in \cite{SGA4}. We will not include here the precise
definition but we address the reader to [ibid.], \cite{Laumon} 14.4
and \cite{behfan}, section 2.

Given a scheme $X$ over $S$ with structural morphism $f:X\rightarrow S$, the $S$-stack of (quasi-coherent) invertible $\mathcal O_X$-modules, $\mathcal Pic_{X/S}$, is a Picard stack: the one associated to the complex of length one
$$\tau_{\leq 0}(Rf_*\mathbb G_m[1]).$$
So, given an $S$-scheme $T$, $\mathcal Pic_{X/S}(T)$ is the groupoid whose objects are invertible $\mathcal O_{X_T}$-modules and whose morphisms are the isomorphisms between them (notation as in \ref{picard}).

$\mathcal Pic_{X/S}$ fits in the exact sequence below, where, given an $S$-scheme $T$, $Pic_{X/S}(T)$ is defined as Pic $X_T/f_T^*($Pic $ T)$ and $B\mathbb G_m(T)$ is the group of line bundles over $T$.
$$0\rightarrow B\mathbb G_m\rightarrow \mathcal Pic_{X/S}\rightarrow Pic_{X/S}\rightarrow 0$$

Now, let us consider the forgetful morphism of stacks $\pi:\overline{\mathcal M}_{g,1}\rightarrow \overline{\mathcal M}_g$. The morphism $\pi$ is strongly representable since, given a morphism $Y$ with a map $h:Y\rightarrow \overline{\mathcal  M}_g$,
the fiber product $Y\times_{\overline{\mathcal M}_g}\overline{\mathcal M}_{g,1}$ is isomorphic to the image of $Id_Y$ under $h$, which is a family of stable curves of genus $g$, say $\mathcal C\rightarrow Y$.

So, we define the category $\mathcal Pic_{\overline{\mathcal M}_{g,1}/\overline{\mathcal M}_g}$ associated to $\pi$ as follows.
Given a scheme $Y$, morphisms from $Y$ to $\overline{\mathcal M}_g$ correspond to families of stable curves over $Y$. So, the objects of $\mathcal Pic_{\overline{\mathcal M_{g,1}}/\overline{\mathcal M}_g}(Y)$ are given by pairs $(\mathcal C\rightarrow Y,\mathcal L)$ where $\mathcal C\rightarrow Y$ is the family of stable curves of genus $g$ associated to a map $Y\rightarrow \overline{\mathcal M}_g$ and $\mathcal L$ is a line bundle on $\mathcal C \cong Y\times_{\overline{\mathcal M}_g}\overline{\mathcal M}_{g,1}$. Morphisms between two such pairs are given by cartesian diagrams
\begin{equation}\label{morf}
\xymatrix{
{\mathcal C} \ar[d] \ar[r]^{h} & {\mathcal C^\prime} \ar[d] \\
{Y} \ar[r] & {Y^\prime}
}
\end{equation}
together with an isomorphism $\mathcal L\cong h^*\mathcal L^\prime$.

We will now concentrate on the following full subcategory of $\mathcal Pic_{\overline{\mathcal M}_{g,1}/\overline{\mathcal M}_g}$ (and on a compactification of it).

\begin{defi}\label{modulardef}
Let ${\mathcal G}_{d,g}$ (respectively $\overline{\mathcal G}_{d,g}$) be the category whose objects are pairs $(f:\mathcal C\rightarrow Y,\mathcal L)$ where $f$ is a family of stable (respectively quasistable) curves of genus $g$ and $\mathcal L$ a balanced line bundle of relative degree $d$ over $Y$. Morphisms between two such pairs are defined as in $\mathcal Pic_{\overline{\mathcal M}_{g,1}/\overline{\mathcal M}_g}$.
\end{defi}

The aim of the present section is to show that both $\mathcal G_{d,g}$ and $\overline{\mathcal G}_{d,g}$ are algebraic (Artin) stacks. We will do it by directly showing that they are isomorphic to the quotient stacks we are about to define.

Let $$G:=GL(r+1).$$
Recall from the section before that $G$ acts on
$H_d$, the locus of GIT-semi\-sta\-ble points in $Hilb_{\mathbb
P^r}^{dt-g+1}$, by projecting onto $PGL(r+1)$.

Consider also the open subset of $H_d$ parametrize points corresponding to stable curves and denote it by $H_d^{st}$. It is easy to see that $H_d^{st}$ is a $G$-equivariant subset of $H_d$. So, we can consider the quotient stacks $[H_d^{st}/G]$ and $[H_d/G]$. Given a scheme $S$, $[H_d^{st}/G](S)$ (respectively $[H_d/G](S)$) consists of $G$-principal bundles $\phi:E\rightarrow S$ with a $G$-equivariant morphism $\psi:E\rightarrow H_d$ (respectively $\psi:E\rightarrow H_d^{st}$). Morphisms are given by pullback diagrams which are compatible with the morphism to $H_d$ (resp. $H_d^{st}$).

\begin{teo}\label{geomdesc}
The quotient stacks $[H_d^{st}/GL(r+1)]$ and $[H_d /GL(r+1)]$ are isomorphic respectively to ${\mathcal G}_{d,g}$ and $\overline{\mathcal G}_{d,g}$.
\end{teo}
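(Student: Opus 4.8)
The plan is to construct mutually inverse morphisms of stacks between the quotient stack $[H_d/G]$ and the category $\overline{\mathcal G}_{d,g}$ (and identically for the stable versions, which proceeds verbatim by restriction to $H_d^{st}$ and $\mathcal G_{d,g}$). The essential geometric input is the universal family over the Hilbert scheme: since $H_d\subseteq Hilb_{\mathbb P^r}^{dt-g+1}$, there is a universal family of quasistable curves $\mathcal C_d\to H_d$ embedded in $\mathbb P^r\times H_d$, and the restriction of $\mathcal O_{\mathbb P^r}(1)$ to $\mathcal C_d$ gives a tautological line bundle $\mathcal L_d$ which is balanced of relative degree $d$ by the description of points of $H_d$ recalled before Definition \ref{dgeneral}. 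The key point is that $G=GL(r+1)$, not $PGL(r+1)$, acts on $\mathbb P^r$ together with a linearization on $\mathcal O_{\mathbb P^r}(1)$, so the pair $(\mathcal C_d\to H_d,\mathcal L_d)$ is genuinely $G$-equivariant (the central $\mathbb G_m\subseteq G$ acts by scaling $\mathcal L_d$). This is precisely why the theorem uses $GL$ rather than $PGL$, and it is what makes the tautological bundle descend to the quotient stack.

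First I would define the functor $\Phi:[H_d/G]\to\overline{\mathcal G}_{d,g}$. An object of $[H_d/G](S)$ is a $G$-principal bundle $\phi:E\to S$ with a $G$-equivariant map $\psi:E\to H_d$. Pulling back the universal pair $(\mathcal C_d,\mathcal L_d)$ along $\psi$ gives a $G$-equivariant family $(\psi^*\mathcal C_d,\psi^*\mathcal L_d)$ over $E$; since $E\to S$ is a $G$-bundle and the data are $G$-equivariant, I would use descent along $\phi$ to obtain a family of quasistable curves $\mathcal C\to S$ together with a balanced relative-degree-$d$ line bundle $\mathcal L$ on $\mathcal C$. Balancedness and the relative degree are preserved by pullback and fppf descent, so $(\mathcal C\to S,\mathcal L)$ is an object of $\overline{\mathcal G}_{d,g}(S)$. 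On morphisms, a pullback diagram of $G$-bundles compatible with the maps to $H_d$ produces a cartesian diagram as in \eqref{morf} together with the required isomorphism of line bundles, so $\Phi$ is a morphism of fibered categories.

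For the inverse $\Psi:\overline{\mathcal G}_{d,g}\to[H_d/G]$, given $(f:\mathcal C\to S,\mathcal L)$ with $\mathcal L$ balanced of relative degree $d$, I would form the Hilbert-point data: after replacing $\mathcal L$ by a suitable twist $\mathcal L\otimes\omega_f^{\otimes n}$ (using Remark \ref{rembalanced}(E), which preserves balancedness) one arranges that $\mathcal L$ is relatively very ample with $f_*\mathcal L$ locally free of rank $r+1$, and the associated embedding $\mathcal C\hookrightarrow\mathbb P(f_*\mathcal L)$ defines a classifying map $S\to [Hilb/G]$ landing in $H_d$ because the curve is quasistable and the bundle balanced. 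Concretely, the frame bundle $E:=\mathrm{Isom}_S(\mathcal O_S^{r+1},f_*\mathcal L)$ is a $GL(r+1)$-principal bundle over $S$, and trivializing $f_*\mathcal L$ over $E$ rigidifies the embedding, yielding a $G$-equivariant map $E\to H_d$; this is the object $\Psi(f,\mathcal L)$. The main obstacle—and the step I would spend the most care on—is checking that $\Phi$ and $\Psi$ are quasi-inverse as pseudofunctors, i.e. that the natural transformations $\Psi\circ\Phi\cong\mathrm{id}$ and $\Phi\circ\Psi\cong\mathrm{id}$ are coherent (compatible with the $2$-categorical structure). The delicate point is the bookkeeping of the central $\mathbb G_m$ and of the twist by $\omega_f$: the choice of frame bundle is canonical only up to the $G$-action, and one must verify that different local trivializations and different allowable twists give canonically $2$-isomorphic objects, so that $\Psi$ is well defined as a morphism of stacks and genuinely inverts $\Phi$. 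Once this coherence is established, the isomorphism $[H_d/G]\cong\overline{\mathcal G}_{d,g}$ follows, and restricting both constructions to the open substack of stable curves gives $[H_d^{st}/G]\cong\mathcal G_{d,g}$.
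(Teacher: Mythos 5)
Your proposal is correct and follows essentially the same route as the paper: one direction twists $\mathcal L$ so that $d$ is large (Remark \ref{rembalanced}(E)), takes the frame bundle of $f_*\mathcal L$ and uses the induced embedding to get a $G$-equivariant map to $H_d$; the other direction pulls back the universal family with its tautological balanced bundle along $\psi$ and descends both along the $G$-torsor $E\to S$, the descent of the line bundle being possible precisely because $GL(r+1)$ (unlike $PGL(r+1)$) acts with a linearization on $\mathcal O_{\mathbb P^r}(1)$. The paper fills in the descent details you sketch (flatness via EGA IV 2.5.1, linearization via \cite{cap} 1.4 and SGA1, effectivity via relative ampleness as in GIT Prop. 7.1), but the strategy and key ideas coincide.
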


\begin{proof}
Since the proof is the same for both cases we will do it only for $[H_d/GL(r+1)]$.

We must show that, for every scheme $S\in SCH_{k}$, the groupoids $\overline{\mathcal G}_{d,g}(S)$ and $[H_d/G](S)$ are equivalent.

Let $(f:\mathcal X\rightarrow S,\mathcal L)$ be a pair consisting of a family $f$ of quasistable curves and a balanced line bundle $\mathcal L$ of relative degree $d$ on $\mathcal X$. We must produce a principal $G$-bundle $E$ on $S$ and a $G$-equivariant morphism $\psi:E\rightarrow H_d$. Since we can take $d$ very large with respect to $g$ (see Remark \ref{rembalanced} (E)), we may assume
that $f_*(\mathcal L)$ is locally free of rank $r+1=d-g+1$. Then, the frame bundle of $f_*(\mathcal L)$ is a principal $GL(r+1)$-bundle: call it $E$.
Now, to find the $G$-equivariant morphism to $H_d$, consider the family $\mathcal X_E:=\mathcal X\times_S E$ polarized by $\mathcal L_E$, the pullback of $\mathcal L$ to $\mathcal X_E$. $\mathcal X_E$ is a family of quasistable curves of genus $g$ and $\mathcal L_E$ is balanced and relatively very ample.
By definition of frame bundle, $f_{E*}(\mathcal L_E)$ is isomorphic to $\mathbb C^{(r+1)}\times E$, so that $\mathcal L_E$ gives an embedding over $E$ of $\mathcal X_E$ in $\P^r\times E$. By the universal property of the Hilbert scheme $H$, this family determines a map $\psi:E\rightarrow H_d$. It follows immediately that $\psi$ is a $G$-equivariant map.

Let us check that isomorphisms in  $\overline{\mathcal G}_{d,g}(S)$ leads canonically to isomorphisms in $[H_d/G](S)$.

An isomorphism between two pairs $(f:\mathcal X\rightarrow S,
\mathcal L)$and $(f^\prime:\mathcal X^\prime\rightarrow S,\mathcal
L^\prime)$ consists of an isomorphism $h:\mathcal X\rightarrow
\mathcal X^\prime$ over $S$ and an isomorphism of line bundles
$\mathcal L\cong h^*\mathcal L^\prime$.
\begin{equation*}
\xymatrix{
{\mathcal X} \ar[rr]^{h} \ar[dr]_{f}& & {\mathcal X^\prime} \ar[dl]^{f^\prime}\\
& {S}
}
\end{equation*}

These determine an unique isomorphism between $f_*(\mathcal L)$ and
$f^\prime_*(\mathcal L^\prime)$ as follows
$$f_*(\mathcal L)\cong f_*(h^*\mathcal L^\prime) \cong f_*^\prime ( h_*(h^*\mathcal L^\prime)))\cong f_*^\prime (\mathcal L^\prime ).$$
As taking the frame bundle gives an equivalence between the category of vector bundles of rank $r+1$ over $S$ and the category of principal $GL(r+1)$-bundles over $S$, the isomorphism $f_*(\mathcal L)\cong f_*^\prime(\mathcal L^\prime)$ leads to an unique isomorphism between their frame bundles, call them $E$ and $E^\prime$ respectively. This isomorphism must be compatible with the $G$-equivariant morphisms $\psi:E\rightarrow H_d$ and $\psi^\prime: E^\prime \rightarrow H_d$ because they are determined by the induced curves $\mathcal X_E$ and $\mathcal X^\prime_{E^\prime}$ embedded in $\P^r$ by $\mathcal L_E$ and $\mathcal L^\prime_{E^\prime}$.

Conversely, given a section $(\phi:E\rightarrow S,\psi:E\rightarrow H_d)$ of $[H_d/G]$ over $S$, let us construct a family of quasistable curves of genus $g$ over $S$ and a balanced line bundle of relative degree $d$ on it.

Let $\mathcal C_d$ be the restriction to $H_d$ of the universal family on $Hilb_{\P^r}^{dt-g+1}$. The pullback of $\mathcal C_d$ by $\psi$ gives a family $\mathcal C_E$ on $E$ of quasistable curves of genus $g$ and a balanced line bundle $\mathcal L_E$ on $\mathcal C_E$ which embeds $\mathcal C_E$ as a family of curves in $\P^r$. As $\psi$ is $G$-invariant and $\phi$ is a $G$-bundle, the family $\mathcal C_E$ descends to a family $\mathcal C_S$ over $S$, where $\mathcal C_S=\mathcal C_E/G$. In fact, since $\mathcal C_E$ is flat over $E$ and $E$ is faithfully flat over $S$, $\mathcal C_S$ is flat over $S$ too (see \cite{ega4}, Prop. 2.5.1). 

Now, since the action of $G$ on $\mathcal C_d$ is naturally linearized (see \cite{cap}, 1.4), also the action of $G$ on $E$ can be linearized to an action on $\mathcal L_E$, yielding descent data for $\mathcal L_E$ (\cite{sga8}, Proposition 7.8). Moreover, $\mathcal L_E$ is relatively (very) ample so, using the fact that $\phi$ is a principal $G$-bundle, we conclude that $\mathcal L_E$ descends to a relatively very ample balanced line bundle on $\mathcal C_S$, $\mathcal L_S$ (see proof of Proposition 7.1 in \cite{git}).

It is straightforward to check that an isomorphism on $[H_d/G](S)$ leads to an unique isomorphism in $\overline{\mathcal G}_{d,g}(S)$.

\end{proof}

We will call $\mathcal G_{d,g}$ and $\overline{\mathcal G}_{d,g}$ respectively \textit{balanced Picard stack} and \textit{compactified balanced Picard stack}. The relation between $\mathcal G_{d,g}$ and $\overline{\mathcal G}_{d,g}$ and the stacks $\mathcal P_{d,g}$ and $\overline{\mathcal P}_{d,g}$ defined in \cite{capneron} will be clear in the following section.

\begin{rem} \upshape{
Since $\mathbb G_m$ is always included in the stabilizers at every point of the action of $G$ both in $H_d$ and in $H_d^{st}$, the quotient stacks above are never Deligne-Mumford. However, they are, of course Artin stacks with a presentation given by the schemes $H_d^{st}$ and $H_d$, respectively.

Notice also that, since the scheme $H_d$ is nonsingular and closed (see Lemma 2.2 in \cite{cap}), the algebraic stack $\overline{\mathcal G}_{d,g}$ is a smooth compactification of $\mathcal G_{d,g}$.}
\end{rem}

Let
$$d\mathcal G_{d,g}$$
be the category over $SCH_ k$ whose sections over a scheme $S$, $d\mathcal G_{d,g}(S)$, consists of pairs $(f:\mathcal X\rightarrow S, \mathcal L)$, where $f$ is a family of $d$-general quasistable curves of genus $g$ and $\mathcal L$ is an $S$-flat balanced line bundle on $\mathcal X$ of relative degree $d$. Arrows between two such pairs are given by cartesian diagrams like in (\ref{morf}).

Using the same proof of Prop. \ref{geomdesc} we conclude that $d\mathcal G_{d,g}$ is isomorphic to the quotient stack $[U_d/G]$.

\section{Rigidified Balanced Picard stacks}\label{rigidification}

Recall that in (\ref{capstack}) we denoted the quotient stack $[H_d/PGL(r+1)]$ by $\overline{\mathcal P}_{d,g}$. Define analogously
$$\mathcal P_{d,g}:=[H_d^{st}/PGL(r+1)]$$
(recall that $H_d^{st}\subset H_d$ parametrizes embedded stable curves).

In what follows we will relate $\mathcal G_{d,g}$ and $\overline{\mathcal G}_{d,g}$, respectively, with $\mathcal P_{d,g}$ and $\overline{\mathcal P}_{d,g}$ using the notion of rigidification of a stack along a group scheme defined by Abramovich, Vistoli and Corti in \cite{avc}, 5.1.

Note that each object $(f:\mathcal X\rightarrow S,\mathcal L)$ in $\overline{\mathcal G}_{d,g}$ have automorphisms given by scalar multiplication by an element of $\Gamma (\mathcal X,\mathbb G_m)$ along the fiber of $\mathcal L$. Since these automorphisms fix $\mathcal X$, there is no hope that our stack $\overline{\mathcal G}_{d,g}$ can be representable over $\overline{\mathcal M}_g$ (see \cite{av}, 4.4.3). The rigidification procedure removes those automorphisms.

More precisely, the set up of rigidification consists of:
\begin{itemize}
\item a stack $\mathcal G$ over a base scheme $\mathbb S$;
\item a finitely presented group scheme $G$ over $\mathbb S$;
\item for any object $\xi$ of $\mathcal G$ over an $\mathbb S$-scheme $S$, an embedding
$$i_\xi:G(S)\rightarrow \mbox{Aut}_S(\xi)$$
compatible with pullbacks.
\end{itemize}
Then the statement (Theorem 5.1.5 in \cite{avc}) is that there exists a stack $\mathcal G^G$ and a morphism of stacks $\mathcal G\rightarrow \mathcal G^G$ over $\mathbb S$ satisfying the following conditions:
\begin{enumerate}
\item For any object $\xi\in \mathcal G(S)$ with image $\eta\in \mathcal G^G(S)$, the set $G(S)$ lies in the kernel of Aut$_S(\xi)\rightarrow$Aut$_S(\eta)$;
\item The morphism $\mathcal G\rightarrow \mathcal G^G$ above is universal for morphisms of stacks $\mathcal G\rightarrow \mathcal F$ satisfying condition $(1)$ above;
\item If $S$ is the spectrum of an algebraically closed field, we have in $(1)$ above that $Aut_S(\eta)=$Aut$_S(\xi)/G(S)$;
\item A moduli space for $\mathcal G$ is also a moduli space for $\mathcal G^G$.
\end{enumerate}
$\mathcal G^G$ is the \textit{rigidification} of $\mathcal G$ along $G$.

By taking $\mathbb S$=Spec $k$, $\mathcal G=\overline{\mathcal G}_{d,g}$ and $G=\mathbb G_m$ we see that our situation fits up in the setting above.
It is easy to see that, since $GL(r+1)$ acts on $H_d$  by projection onto $PGL(r+1)$, $[H_d/PGL(r+1)]$ is the rigidification of $[H_d/GL(r+1)]\cong \overline{\mathcal G}_{d,g}$ along $\mathbb G_m$: denote it by $\overline{\mathcal G}_{d,g}^{\mathbb G_m}$. Naturally, the same holds for $[H_d^{st}/PGL(r+1)]$ and $\mathcal G_{d,g}^{\mathbb G_m}$. The following is now immediate.

\begin{prop} \label{isorig}
The stacks $\mathcal P_{d,g}$ and $\overline{\mathcal P}_{d,g}$ are isomorphic, respectively, to $\mathcal G_{d,g}^{\mathbb G_m}$ and $\overline{\mathcal G}_{d,g}^{\mathbb G_m}$. In particular, $\mathcal G_{d,g}$ and $\overline{\mathcal G}_{d,g}$ are $\mathbb G_m$-gerbes over $\mathcal P_{d,g}$ and $\overline{\mathcal P}_{d,g}$, respectively.
\end{prop}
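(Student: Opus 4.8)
The plan is to deduce the statement from the identification of quotient stacks supplied by Theorem \ref{geomdesc} together with the general behaviour of rigidification for quotient stacks. By Theorem \ref{geomdesc} I may replace $\overline{\mathcal G}_{d,g}$ and $\mathcal G_{d,g}$ by $[H_d/GL(r+1)]$ and $[H_d^{st}/GL(r+1)]$, while by definition $\overline{\mathcal P}_{d,g}=[H_d/PGL(r+1)]$ and $\mathcal P_{d,g}=[H_d^{st}/PGL(r+1)]$. Since the $GL(r+1)$-action on $H_d$ (and on $H_d^{st}$) factors through $PGL(r+1)$, the central subgroup $\mathbb G_m\subset GL(r+1)$ acts trivially, hence lies in the stabilizer of every point and so in $\mathrm{Aut}_S(\xi)$ for every object $\xi$. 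First I would record that, under the equivalence of Theorem \ref{geomdesc}, this central $\mathbb G_m$ corresponds exactly to the automorphisms of $\xi=(f:\mathcal X\to S,\mathcal L)$ given by scalar multiplication along $\mathcal L$: as the fibres of $f$ are connected curves one has $f_*\mathcal O_{\mathcal X}=\mathcal O_S$, so $\Gamma(\mathcal X,\mathbb G_m)=\mathbb G_m(S)$ and the resulting embedding $i_\xi:\mathbb G_m(S)\hookrightarrow\mathrm{Aut}_S(\xi)$ is precisely the rigidification datum required in the setup of \cite{avc}.

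The heart of the argument is to show that the morphism $\rho:[H_d/GL(r+1)]\to[H_d/PGL(r+1)]$ induced by the homomorphism $GL(r+1)\to PGL(r+1)$ exhibits the target as the rigidification $\overline{\mathcal G}_{d,g}^{\mathbb G_m}$, i.e. that $\rho$ satisfies property (1) and the universal property (2) of Theorem 5.1.5 in \cite{avc}. Property (1) is immediate since $\mathbb G_m$ maps to the identity of $PGL(r+1)$. For (2), by the universal property of quotient stacks a morphism $[H_d/GL(r+1)]\to\mathcal F$ is the same datum as a $GL(r+1)$-equivariant object of $\mathcal F$ over $H_d$, and condition (1) for such a morphism says exactly that the central $\mathbb G_m$ acts trivially on this object; such an object is then the same as a $PGL(r+1)$-equivariant object, so the morphism factors uniquely through $[H_d/PGL(r+1)]$. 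The main obstacle is precisely this factorization: one must check that a $GL(r+1)$-equivariant object on which the central $\mathbb G_m$ acts trivially descends uniquely to a $PGL(r+1)$-equivariant object, which follows from the exact sequence $1\to\mathbb G_m\to GL(r+1)\to PGL(r+1)\to 1$ by fppf descent along the smooth atlas $H_d\to[H_d/PGL(r+1)]$. Conditions (3) and (4) are then only consistency checks — (3) because $\ker\big(GL(r+1)(S)\to PGL(r+1)(S)\big)=\mathbb G_m(S)$, and (4) because both quotient stacks have the GIT quotient $\overline P_{d,g}$ as moduli space. The identical argument over $H_d^{st}$ yields $\mathcal P_{d,g}\cong\mathcal G_{d,g}^{\mathbb G_m}$.

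For the final assertion I would observe that the rigidification morphism is a $\mathbb G_m$-gerbe, which in the quotient presentation is transparent: the fibre of $\rho$ over a geometric point is the classifying stack $B\ker\big(GL(r+1)\to PGL(r+1)\big)=B\mathbb G_m$. Hence $\overline{\mathcal G}_{d,g}\to\overline{\mathcal P}_{d,g}$, and likewise $\mathcal G_{d,g}\to\mathcal P_{d,g}$, are $\mathbb G_m$-gerbes. Apart from the descent step isolated in the second paragraph, every part of the argument is a direct translation through Theorem \ref{geomdesc} and the definitions of the quotient stacks, which is why the statement can fairly be called immediate.
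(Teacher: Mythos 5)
Your proof is correct and takes essentially the same route as the paper: both reduce, via Theorem \ref{geomdesc}, to identifying $[H_d/PGL(r+1)]$ (resp. $[H_d^{st}/PGL(r+1)]$) as the rigidification of $[H_d/GL(r+1)]$ (resp. $[H_d^{st}/GL(r+1)]$) along the central $\mathbb G_m$, which acts trivially since the $GL(r+1)$-action factors through $PGL(r+1)$. The only difference is one of detail: the paper declares this identification ``easy to see'' and the proposition ``immediate,'' whereas you actually verify the universal property of \cite{avc}, Theorem 5.1.5, by equivariant descent along $GL(r+1)\to PGL(r+1)$ and check the gerbe condition fibrewise.
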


\begin{rem}\upshape{
The previous proposition holds, of course, also for the rigidification along $\mathbb G_m$ of $d\mathcal G_{d,g}$ , $d\mathcal G_{d,g}^{\mathbb G_m}$, and $\overline{\mathcal P}_{d,g}^{Ner}$, which we have studied in section \ref{dgeneralstack} (see the definition of $d\mathcal G_{d,g}$ in the end of section \ref{modular}).}
\end{rem}

Recall from the beginning of section \ref{modular} that, if $(d-g+1,2g-2)=1$, $\overline{\mathcal P}_{d,g}$ has a modular description as the rigidification for the action of $\mathbb G_m$ in a certain category.

In order to remove from $\overline{\mathcal G}_{d,g}$ the automorphisms given by $\mathbb G_m$, we first consider the auxiliar category $\overline{\mathcal A}_{d,g}$, whose objects are the same of $\overline{\mathcal G}_{d,g}$ but where morphisms between pairs $(\mathcal C\rightarrow Y,\mathcal L)$ and $(\mathcal C^ \prime\rightarrow Y^\prime,\mathcal L^\prime)$ are given by equivalence classes of morphisms in $\overline{\mathcal G}_{d,g}$ by the following relation. Given a cartesian diagram
\begin{equation}
\xymatrix{
{\mathcal C} \ar[d] \ar[r]^{h} & {\mathcal C^\prime} \ar[d] \\
{Y} \ar[r] & {Y^\prime}
}
\end{equation}
and isomorphisms $\phi:\mathcal L\rightarrow h^*\mathcal L^\prime$ and $\psi:\mathcal L\rightarrow h^*\mathcal L^\prime$, we say that $\phi$ is equivalent to $\psi$ if there exists $\alpha\in \mathbb G_m$ such that $\underline{\alpha} \circ \psi=\phi$, where by $\underline \alpha$ we mean the morphism induced by $\alpha$ in $\mathcal L^\prime$ (fiberwise multiplication by $\alpha$).

There is an obvious morphism of $\overline{\mathcal G}_{d,g}\rightarrow\overline{\mathcal A}_{d,g}$ satisfying property $(1)$ above and universal for morphisms of $\overline{\mathcal G}_{d,g}$ in categories satisfying it. However, it turns out that $\overline{\mathcal A}_{d,g}$ is not a stack. In fact, it is not even a prestack since, given an \'etale cover $\left\lbrace \coprod_iY_i\rightarrow Y\right\rbrace $ of $Y$, the natural morphism $\overline{\mathcal A}_{d,g}(Y)$ to $\overline{\mathcal A}_{d,g}(\coprod_iY_i\rightarrow Y)$, the category of effective descent data for this covering, is not fullyfaithful but just faithful.

Let us now consider the category $\overline{\mathcal C}_{d,g}$, with the following modular description. A section of $\overline{\mathcal C}_{d,g}$ over a scheme $S$ is given by a pair $(f:\mathcal X\rightarrow S,\mathcal L)$, where $f$ is a family of quasistable curves of genus $g$ and $\mathcal L$ is a balanced line bundle on $\mathcal X$ of relative degree $d$. Arrows between such pairs are given by cartesian diagrams
\begin{equation*}
\xymatrix{
{\mathcal X} \ar[d]_{f} \ar[r]^{h} & {\mathcal X^\prime} \ar[d]^{f^\prime} \\
{S} \ar[r] & {S^\prime}
}
\end{equation*}
and equivalence classes of isomorphisms $\mathcal L\cong h^*\mathcal L^\prime\otimes f^*M$, for some $M\in$ Pic $S$, for the following relation. Isomorphisms $\phi:\mathcal L\rightarrow h^*\mathcal L^ \prime\otimes f^*M$ and $\psi:\mathcal L\rightarrow h^*\mathcal L^ \prime\otimes f^*N$ are equivalent if there exists an isomorphism $g:N\rightarrow M$ of line bundles on $S$ such that the following diagram commutes.
\begin{equation*}
\xymatrix{
{\mathcal L} \ar[r]^{\phi} \ar[rd]_{\psi} & {h^*\mathcal L^\prime\otimes f^*M} \\
& {h^*\mathcal L^\prime\otimes f^*N} \ar[u]_{id\otimes f^*g}
}
\end{equation*}
Straightforward computations show that $\overline{\mathcal C}_{d,g}$ is a prestack. Moreover, given the \'etale cover $(\coprod_iY_i\rightarrow Y)$ of $Y$, $\overline{\mathcal A}_{d,g}(\coprod_iY_i\rightarrow Y)$ is isomorphic to $\overline{\mathcal C}_{d,g}(\coprod_iY_i\rightarrow Y)$.

So, we conclude that the stackification of $\overline{\mathcal C}_{d,g}$ is the rigidification of $\overline{\mathcal G}_{d,g}$ under the action of $\mathbb G_m$.

\begin{prop}
The stack $\overline{\mathcal P}_{d,g}$ (resp. $\mathcal P_{d,g}$) is the stackification of the prestack whose sections over a scheme $S$ are given by pairs $(f:\mathcal X\rightarrow S,\mathcal L)$, where $f$ is a family of quasistable (resp. stable) curves of genus $g$ and $\mathcal L$ is a balanced line bundle on $\mathcal X$ of relative degree $d$. Arrows between two such pairs are given by cartesian diagrams
\begin{equation*}
\xymatrix{
{\mathcal X} \ar[d]_{f} \ar[r]^{h} & {\mathcal X^\prime} \ar[d]^{f^\prime} \\
{S} \ar[r] & {S^\prime}
}
\end{equation*}
and an isomorphism $\mathcal L\cong h^*\mathcal L^\prime\otimes f^*M$, for some $M\in$ Pic $S$.
\end{prop}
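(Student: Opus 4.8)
The plan is to reduce everything to Proposition \ref{isorig} together with the comparison of fibered categories already set up in this section. By that proposition $\overline{\mathcal P}_{d,g}\cong\overline{\mathcal G}_{d,g}^{\mathbb G_m}$, the rigidification along the central $\mathbb G_m$ of scalar automorphisms of the algebraic stack $\overline{\mathcal G}_{d,g}$ produced in Theorem \ref{geomdesc}. The prestack appearing in the statement is exactly $\overline{\mathcal C}_{d,g}$: the isomorphism $\mathcal L\cong h^*\mathcal L^\prime\otimes f^*M$ is understood up to the equivalence relation introduced above, which is precisely what turns its Hom-presheaves into sheaves. So it suffices to prove that the stackification of $\overline{\mathcal C}_{d,g}$ is $\overline{\mathcal G}_{d,g}^{\mathbb G_m}$. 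The stable case is identical, replacing quasistable curves by stable ones throughout (equivalently $H_d$ by $H_d^{st}$ and the barred stacks by $\mathcal G_{d,g}$, $\mathcal P_{d,g}$), so I would only write out the compactified case.

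I would build the rigidification in the two stages of \cite{avc}, Theorem 5.1.5: first quotient the scalar automorphisms out of $\overline{\mathcal G}_{d,g}$, obtaining the fibered category $\overline{\mathcal A}_{d,g}$, and then stackify; thus $\overline{\mathcal G}_{d,g}^{\mathbb G_m}$ is by construction the stack associated to $\overline{\mathcal A}_{d,g}$. The crucial input is the isomorphism of descent categories $\overline{\mathcal A}_{d,g}(\coprod_iY_i\to Y)\cong\overline{\mathcal C}_{d,g}(\coprod_iY_i\to Y)$ recorded above. Since the stack associated to a fibered category is computed from its categories of descent data, and these agree for $\overline{\mathcal A}_{d,g}$ and $\overline{\mathcal C}_{d,g}$ over every cover, the two fibered categories share the same associated stack, namely $\overline{\mathcal G}_{d,g}^{\mathbb G_m}\cong\overline{\mathcal P}_{d,g}$. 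To make this airtight I would trace the canonical morphisms through the universal properties: the composite $\overline{\mathcal G}_{d,g}\to\overline{\mathcal A}_{d,g}\to(\overline{\mathcal C}_{d,g})^{\mathrm{st}}$ kills $\mathbb G_m$, hence factors through $\overline{\mathcal G}_{d,g}^{\mathbb G_m}$ by property $(2)$ of the rigidification, while conversely $\overline{\mathcal G}_{d,g}\to\overline{\mathcal G}_{d,g}^{\mathbb G_m}$ factors through $\overline{\mathcal A}_{d,g}$ and then, the target being a stack, through $(\overline{\mathcal C}_{d,g})^{\mathrm{st}}$; a short check that these two comparison morphisms are mutually inverse finishes the identification.

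The point that genuinely needs care — and which I expect to be the main obstacle — is the passage from $\overline{\mathcal A}_{d,g}$ to $\overline{\mathcal C}_{d,g}$, i.e.\ understanding why the twist by $f^*M$ must appear. The morphism presheaf of $\overline{\mathcal A}_{d,g}$ between two objects is $T\mapsto\mathrm{Isom}(\mathcal L_T,h^*\mathcal L^\prime_T)/\mathbb G_m(T)$, and this presheaf fails to be a sheaf: a compatible system of scalar-classes of isomorphisms over an \'etale cover need not glue to a single isomorphism $\mathcal L\cong h^*\mathcal L^\prime$, the gluing obstruction being a \v{C}ech $1$-cocycle valued in $\mathbb G_m$, that is, a line bundle $M\in\mathrm{Pic}\,S$. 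Sheafifying therefore produces exactly the datum of an isomorphism $\mathcal L\cong h^*\mathcal L^\prime\otimes f^*M$ modulo the indeterminacy in the trivialisation of $M$, which is precisely the equivalence relation defining $\overline{\mathcal C}_{d,g}$. Verifying that $\overline{\mathcal C}_{d,g}$ is already a prestack and that the above sheafification is realised by the isomorphism of descent categories is the technical heart; here one uses that $f$ has geometrically connected fibres, so that $f_*\mathcal O_{\mathcal X}=\mathcal O_S$ and $f^*$ is injective on Picard groups, which guarantees that $M$ is determined up to isomorphism and that the residual scalar ambiguity is exactly what the equivalence relation records.
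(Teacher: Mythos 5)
Your proposal is correct and takes essentially the same approach as the paper: identify the prestack in the statement with $\overline{\mathcal C}_{d,g}$, invoke Proposition \ref{isorig}, realize the rigidification $\overline{\mathcal G}_{d,g}^{\mathbb G_m}$ as the stackification of the auxiliary category $\overline{\mathcal A}_{d,g}$, and conclude from the isomorphism of descent categories $\overline{\mathcal A}_{d,g}(\coprod_i Y_i\to Y)\cong \overline{\mathcal C}_{d,g}(\coprod_i Y_i\to Y)$. Your \v{C}ech-cocycle explanation of why the twist by $f^*M$ arises, together with the use of $f_*\mathcal O_{\mathcal X}=\mathcal O_S$, simply makes explicit what the paper records as straightforward computations.
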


\begin{rem}\upshape{
Let $d>>0$. Then, as in the case $(d-g+1,2g-2)=1$, there is a canonical map from $\overline{\mathcal P}_{d,g}$ and $\mathcal P_{d,g}$ for $\overline{P}_{d,g}$ and $P_{d,g}$, the GIT-quotients of $H_d$ and $H_d^{st}$, respectively, by the action of $PGL(r+1)$. If $(d-g+1,2g-2)\neq 1$, these quotients are not geometric, which implies that these maps are universally closed but not separated.  So, $\overline{P}_{d,g}$ and $P_{d,g}$ are not coarse moduli spaces for those stacks since the associated maps from the stacks onto them are not proper. However, at least if the base field has characteristic $0$, we have that the GIT-quotients are \textit{good moduli spaces} in the sense of Alper (see \cite{alper}).}
\end{rem}

\subsection{Rigidified balanced Picard stacks and N\'eron models.}

The main question now is the following: does our balanced Picard
stack ${\mathcal P}_{d,g}$ para\-me\-tri\-zes N\'eron models of
families of stable curves for every $d$ as $\mathcal P_{d,g}^{Ner}$
does for families of $d$-general curves (see \ref{neron1})?

Given a family of stable curves $f:\mathcal X\rightarrow B=$ Spec
$R$, we will denote by $Q_f^d$ the base-change of the map ${\mathcal
P}_{d,g}\rightarrow \overline{\mathcal M}_g$ by the natural map
$B\rightarrow \overline{\mathcal M}_g$: $B\times_{\overline{\mathcal
M}_g}\mathcal P_{d,g}$. Is $Q_f^d$ isomorphic to $N($Pic$^d\mathcal
X_K$?

The problem here is that the map ${\mathcal P}_{d,g}\rightarrow \overline{\mathcal M}_g$ is not representable in general. In fact, if it were, ${\mathcal P}_{d,g}$ would be a Deligne-Mumford stack. Indeed, it is easy to see that a stack with a representable map to a Deligne-Mumford stack is necessarily Deligne-Mumford.
As we already mentioned, ${\mathcal P}_{d,g}$ is not Deligne-Mumford in general since it is the quotient stack associated to a non-geometric GIT-quotient.

As a consequence of this, if the closed fiber of $f$ is not $d$-general, then $ Q_f^d$ is not even equivalent to an algebraic space. In fact, from a common criterion for representability (see for example \cite{av}, 4.4.3 or the proof of Proposition \ref{propdm}), we know that $ Q_f^d$ would be equivalent to an algebraic space if and only if the automorphism group of every section of ${\mathcal P}_{d,g}$ over $k$ with image in $\overline{\mathcal M}_g$ isomorphic to $ X_k\rightarrow k$ injects into the automorphism group of $ X_k$. Since such a section corresponds to a map onto its orbit in $H_d$ by the action of $PGL(r+1)$, the automorphism group of such a section is isomorphic to the stabilizer of that orbit. So, as $X_k$ is not $d$-general, the stabilizer of its associated orbit in $H_d$ is not finite, which implies that it cannot have an injective morphism to the automorphism group of $ X_k$, which is, of course, finite.

The following example will clarify what we just said.

\begin{ex} \upshape{
Let $d=g-1$ and
$f:\mathcal X\rightarrow B=$ Spec $R$ be a family of stable curves
such that $\mathcal X$ is regular and $ X_k$ is a reducible curve
consisting of two smooth components $C_1$ and $C_2$ of genus $g_1$
and $g_2$ respectively, meeting in one point (of course,
$g=g_1+g_2$). Then, the fiber over $k$ of $ Q_f^d$ is a stack with a
presentation given by a subscheme of the Hilbert scheme $H_d$, consisting of two connected components of dimension $r(r+2)+g$
(see \cite{cap} Example 7.2). These correspond to projective realizations of $ X_k$ on $\mathbb P^r$ given by line bundles with the two possible balanced multidegrees on $ X_k$: $(g_1, g_2-1)$ and $(g_1-1,g_2)$. By Proposition 5.1 of \cite{cap}, given a point $h$ in one of these components, there is a point in $\overline{O_{PGL(r+1)}(h)}$ representing the quasistable curve with stable model $ X_k$ embedded by a line bundle of multidegree $(g_1-1, 1, g_2-1)$.

So, as a stack, the fiber of $ Q_f^d$ over $k$ is reducible but the
GIT quotient of the Hilbert scheme presenting it by the action of
$PGL(r+1)$ is irreducible and isomorphic to the Jacobian of $ X_k$.
As a consequence, $Q_f^d$  can never be isomorphic to the N\'eron
model $N($Pic$^d\mathcal X_K)$.

This is an example of a situation where the GIT-quotient identifies
two components of the Hilbert scheme while in the quotient stack
these two components remain separated.
}\end{ex}

So, we have.

\begin{prop} 
Let $f:\mathcal X\rightarrow B=$ Spec $R$ be a family of stable
curves with $\mathcal X$ regular. Then, using the notation above,
$Q_f^d \cong N$(Pic$^d\mathcal X_K)$ if and only if and only if
$X_k$ is a $d$-general curve.
\end{prop}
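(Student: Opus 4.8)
The plan is to prove both implications by carefully tracking the representability criterion used throughout the paper (see \cite{av}, 4.4.3), which says that the base-change $Q_f^d = B\times_{\overline{\mathcal M}_g}\mathcal P_{d,g}$ is an algebraic space (hence can hope to be the N\'eron model) precisely when the automorphism group of every section of $\mathcal P_{d,g}$ over $k$ lying over $X_k$ injects into $\mathrm{Aut}(X_k)$. I would treat the two directions asymmetrically: the ``only if'' direction reduces to the non-representability phenomenon already exhibited in the paragraphs preceding the statement, while the ``if'' direction reduces to the N\'eron model identification of Section \ref{neron1}.

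First I would prove the direction that if $X_k$ is $d$-general then $Q_f^d\cong N(\mathrm{Pic}^d\mathcal X_K)$. The point is that $d$-generality of the closed fiber implies that the moduli map $B\to\overline{\mathcal M}_g$ factors through $\overline{\mathcal M}_g^d$: indeed the generic fiber is smooth, hence $d$-general, and $\overline{\mathcal M}_g^d$ is open (as established in Section \ref{dgeneraldesc}), so once the closed point lands in $\overline{\mathcal M}_g^d$ the whole of $B$ does. But over $\overline{\mathcal M}_g^d$ the stacks $\mathcal P_{d,g}$ and $\mathcal P_{d,g}^{Ner}$ agree (both restrict to $[U_d^{st}/PGL(r+1)]$ on this locus), so $Q_f^d$ coincides with the scheme $P_f^d$ of Section \ref{neron1}. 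Since $\mathcal X$ is regular, \cite{capneron}, Theorem 6.1 gives $P_f^d\cong N(\mathrm{Pic}^d\mathcal X_K)$, which is exactly what is wanted.

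Conversely, I would prove that if $X_k$ is $d$-special then $Q_f^d$ cannot be isomorphic to $N(\mathrm{Pic}^d\mathcal X_K)$. The cleanest argument runs through the representability criterion: a section of $\mathcal P_{d,g}$ over $k$ lying over $X_k$ corresponds to an orbit in $H_d$ of the $PGL(r+1)$-action, and its automorphism group is the stabilizer of that orbit. Because $X_k$ is $d$-special, it admits a balanced but not stably balanced line bundle, so its Hilbert point is GIT-semistable but not GIT-stable, and hence has a positive-dimensional (non-finite) stabilizer. This stabilizer cannot inject into the finite group $\mathrm{Aut}(X_k)$, so $Q_f^d$ fails to be an algebraic space. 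Since the N\'eron model $N(\mathrm{Pic}^d\mathcal X_K)$ is a scheme, and in particular an algebraic space, $Q_f^d$ cannot be isomorphic to it.

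The main obstacle, and the step I would treat most carefully, is the converse direction, specifically the passage from ``$X_k$ is $d$-special'' to ``the relevant stabilizer in $H_d$ is non-finite''. This requires knowing that a $d$-special stable curve really does occur as the stable model of a quasistable curve whose Hilbert point is strictly semistable, and that the stabilizer of such a strictly semistable point is positive-dimensional; both facts rest on the GIT analysis in \cite{cap} (the identification of $H_d^s$ with stably balanced embeddings and Proposition 5.1 governing orbit closures). The explicit $d=g-1$ computation in the Example just above the statement is really a special case of this mechanism, and I would lean on it as a template, observing that the strictly semistable orbits corresponding to the distinct balanced multidegrees on $X_k$ are identified in the GIT quotient but kept separate in the quotient stack, which is precisely the source of the extra, non-injecting automorphisms.
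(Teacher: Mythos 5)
Your first direction is correct and coincides with the paper's (implicit) argument: since $\overline{M}_g^d$ is open and $B$ is the spectrum of a local ring, $d$-generality of $X_k$ forces the moduli map to factor through $\overline{\mathcal M}_g^d$; over that locus $\mathcal P_{d,g}$ restricts to $[U_d^{st}/PGL(r+1)]=\mathcal P_{d,g}^{Ner}$, so $Q_f^d=P_f^d$, and \cite{capneron}, Theorem 6.1 (as invoked in Section \ref{neron1}) gives $Q_f^d\cong N(\mbox{Pic}^d\mathcal X_K)$.

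The converse direction, however, has a genuine gap, located exactly at the step you yourself flagged as delicate. The inference ``GIT-semistable but not GIT-stable, hence positive-dimensional stabilizer'' is false: strict semistability may come from non-closedness of the orbit rather than from an infinite stabilizer, and that is what happens here. The stabilizer in $PGL(r+1)$ of the Hilbert point of a nondegenerate embedded \emph{connected} curve always injects into the automorphism group of that curve (an element fixing the curve pointwise fixes a spanning family of points, and connectedness forces a single eigenvalue, hence a scalar); so the Hilbert point of $X_k$ itself, embedded by a balanced non stably balanced line bundle, has \emph{finite} stabilizer --- this is the very mechanism used in the proof of Proposition \ref{propdm}. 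What is positive-dimensional is the stabilizer of the closed orbit in its orbit closure, i.e.\ of the quasistable curve with a disconnecting exceptional component produced by \cite{cap}, Proposition 5.1; your last paragraph identifies this correctly, but that point lies in $H_d\setminus H_d^{st}$, so it defines a section of $\overline{\mathcal P}_{d,g}=[H_d/PGL(r+1)]$ and \emph{not} of $\mathcal P_{d,g}=[H_d^{st}/PGL(r+1)]$, which is the stack from which $Q_f^d$ is built. Consequently every section of $\mathcal P_{d,g}$ over $k$ lying over $X_k$ has automorphism group injecting into $\mathrm{Aut}(X_k)$, the criterion of \cite{av}, 4.4.3 is satisfied, and this route cannot show that $Q_f^d$ fails to be an algebraic space. (The paper's own paragraph preceding the statement is loose on the same point --- it speaks of orbits in $H_d$ while sections of $\mathcal P_{d,g}$ only see $H_d^{st}$; it is the Example that carries the real argument.) A proof of the converse that does close is the component count generalizing that Example: the special fiber of $Q_f^d$ has one component for each balanced multidegree on $X_k$, while the special fiber of $N(\mbox{Pic}^d\mathcal X_K)$ has $|\Delta_{X_k}|$ components, and by \cite{capneron}, Proposition 4.12 these numbers agree if and only if every balanced multidegree on $X_k$ is stably balanced, i.e.\ if and only if $X_k$ is $d$-general; equivalently, two balanced representatives of one degree class extend a single generic line bundle in two ways (regularity of $\mathcal X$ enters here), so $Q_f^d$ is non-separated whereas the N\'eron model is separated.
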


\subsubsection{Functoriality for non $d$-general curves}

Let $f:\mathcal X\rightarrow S$ be a family of stable curves. Denote by $\overline P_f^d$ the fiber product of $\overline{\mathcal P}_{d,g}$ by the moduli map of $f$, $\mu_f:S\rightarrow \overline{\mathcal M}_g$.

Recall that, if $(d-g+1,2g-2)=1$, ${\overline P}_f^d$ is a compactification of the relative degree $d$ Picard variety associated to $f$ in the sense of \ref{functorial} (see Remark \ref{functorialcap}).

Let now $(d-g+1,2g-2)\neq 1$. Then, since $\overline{\mathcal P}_{d,g}$ is not representable over $\overline{\mathcal M}_g$, we have just observed that the same cannot be true in general.

However, we have the following result.

\begin{prop}\label{propcan}
Notation as before.
Then $\overline{P}_f^d$ has a canonical proper map onto a compactification $P$ of the relative degree $d$ Picard variety associated to $f$.
\end{prop}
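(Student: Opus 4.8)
The plan is to exhibit the compactification $P$ as the scheme obtained from Caporaso's GIT-quotient, and then to realize $\overline{P}_f^d$ as a fiber product that maps canonically and properly onto it. Concretely, recall from (\ref{capstack}) that $\overline{\mathcal P}_{d,g}=[H_d/G]$ with $G=PGL(r+1)$, and that the GIT-quotient $\pi_d:H_d\rightarrow \overline{P}_{d,g}$ is the coarse/good moduli space of this quotient stack (see Example \ref{coarse} and the final Remark of Section \ref{rigidification}). The stack $\overline{\mathcal P}_{d,g}$ therefore comes equipped with a canonical morphism $q:\overline{\mathcal P}_{d,g}\rightarrow \overline{P}_{d,g}$ to the scheme $\overline{P}_{d,g}$, compatible with the proper map $\phi_d:\overline{P}_{d,g}\rightarrow \overline{M}_g$. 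I would define $P$ to be the base change of $\phi_d:\overline{P}_{d,g}\rightarrow \overline{M}_g$ along the coarse moduli map $S\rightarrow \overline M_g$ underlying $\mu_f$; by Definition \ref{functorial} this $P$ is precisely a compactification of the relative degree $d$ Picard variety associated to $f$, since its fiber over a closed point $\xi\in S$ is $\phi_d^{-1}(X_\xi)$ by construction, and it is projective over $S$ because $\phi_d$ is proper (indeed projective).

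Next I would produce the canonical map $\overline{P}_f^d\rightarrow P$. Since $\overline{P}_f^d=S\times_{\overline{\mathcal M}_g}\overline{\mathcal P}_{d,g}$ and $P=S\times_{\overline M_g}\overline{P}_{d,g}$, the morphism $q$ above together with the compatibility of $q$ with $\phi_d$ and with the projection $\overline{\mathcal M}_g\rightarrow \overline M_g$ induces, by the universal property of fiber products, a canonical morphism $\rho:\overline{P}_f^d\rightarrow P$ over $S$. This is the base change over $S$ of the map $q$ from the stack to its moduli space.

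It then remains to show that $\rho$ is proper. Here I would argue exactly as in the proof of Lemma \ref{rep}: the map $q:\overline{\mathcal P}_{d,g}\rightarrow \overline{P}_{d,g}$ from an Artin stack to its (good) moduli space is proper (cf. \cite{vistoli} 2.1 in the Deligne-Mumford case, and \cite{alper} for good moduli spaces in the setting of the final Remark of Section \ref{rigidification}), and properness is stable under base change, so $\rho$ is proper. The point worth emphasizing is that, unlike in Lemma \ref{rep}, I should \emph{not} claim $\rho$ is an isomorphism or that $\overline{P}_f^d$ is a scheme: because $X_k$ is not $d$-general, the stabilizers are positive-dimensional and $q$ has positive-dimensional fibers (the $\mathbb G_m$-gerbe structure of Proposition \ref{isorig} and the identification of components discussed in the preceding Example), so $\rho$ is proper but not quasi-finite, hence not representable in the strong sense.

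The main obstacle I anticipate is verifying the properness of $q$ in the non-geometric case with full rigor. Whereas for Deligne-Mumford stacks the properness of the map to the coarse space is classical (\cite{vistoli} 2.1), in the general Artin case the relevant statement is the properness of the morphism from a stack to its \emph{good} moduli space in the sense of Alper; this requires the characteristic-zero hypothesis flagged in the Remark at the end of Section \ref{rigidification}, or at least a careful appeal to the universal closedness of the GIT-quotient map. I would therefore make the invocation of \cite{alper} explicit and note that, even in positive characteristic, the morphism $H_d\rightarrow \overline P_{d,g}$ being a universally closed (though non-separated) GIT-quotient already yields universal closedness of $q$, which suffices for properness of $\rho$ once one checks separatedness of $P\rightarrow S$ separately from the projectivity of $\phi_d$.
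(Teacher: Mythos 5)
Your construction of $P$ and of the canonical map takes a different route from the paper, and that part is sound. The paper defines $P$ as the GIT-quotient by $PGL(r+1)$ of $H_d^f$, the closure in $H_d$ of the locus of curves isomorphic to fibers of $f$, and obtains $\overline{P}_f^d\to P$ from the universal property of the categorical quotient (morphisms from $[H_d^f/PGL(r+1)]$ to schemes factor uniquely through the GIT-quotient, \cite{vistoli} section 2); you instead take $P=S\times_{\overline{M}_g}\overline{P}_{d,g}$ and invoke the universal property of the fiber product. Your $P$ is tautologically a projective $S$-scheme whose fibers are the $\phi_d^{-1}(X_\xi)$, which is arguably cleaner than the paper's $P$ (which is not visibly an $S$-scheme). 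One small inaccuracy: $\rho$ is not literally the base change of $q$, since $\overline{P}_f^d$ is a fiber product over the stack $\overline{\mathcal M}_g$ while $P$ is taken over the coarse space $\overline{M}_g$; rather $\rho$ is the composite of a base change of the finite relative diagonal $\overline{\mathcal M}_g\to\overline{\mathcal M}_g\times_{\overline{M}_g}\overline{\mathcal M}_g$ with the base change of $q$. This is harmless but should be said.

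The genuine gap is the properness step. Your main argument requires $q:\overline{\mathcal P}_{d,g}\to\overline{P}_{d,g}$ to be proper, and this is false precisely in the case the proposition addresses: for $(d-g+1,2g-2)\neq 1$ the quotient is not geometric, and the paper states explicitly, in the very Remark of Section \ref{rigidification} you cite for support, that these maps ``are universally closed but not separated'', so that $\overline{P}_{d,g}$ is \emph{not} a coarse moduli space of $\overline{\mathcal P}_{d,g}$ and $q$ is not proper. Neither citation rescues this: \cite{vistoli} 2.1 applies to geometric quotients (Deligne-Mumford stacks), and a good-moduli-space morphism in the sense of \cite{alper} is universally closed but in general not separated (e.g. $[\mathbb A^1/\mathbb G_m]\to \mbox{Spec }k$); concretely, strictly semistable points of $H_d$ have positive-dimensional stabilizers, so $\overline{\mathcal P}_{d,g}$ has non-proper inertia and cannot be separated over any algebraic space. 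Your fallback --- descending universal closedness of the proper map $H_d\to\overline{P}_{d,g}$ along the surjective presentation and then base changing --- is essentially the paper's own argument (the paper runs it with $H_d^f\to P$ factoring through the presentation of $\overline{P}_f^d$), and it is what actually works; but your proposed upgrade to full properness by ``checking separatedness of $P\to S$'' cannot work: separatedness of $P\to S$ says nothing about the diagonal of $\rho$, and $\rho$ is genuinely non-separated because $\mathbb G_m$ sits in its relative inertia. What your repaired argument and the paper's proof both actually deliver is that $\rho$ is universally closed; ``proper'' in the statement can only be read in that weaker sense.
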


\begin{proof}
If all fibers of $f$ are $d$-general, then from Corollary \ref{functorialneron} it follows that $\overline{P}_f^d$ is a scheme and it gives a compactification of the relative degree $d$ Picard variety associated to $f$.

Suppose not all fibers of $f$ are $d$-general. Then $\overline
P_f^d$ is a stack with a presentation given by the subscheme $H_d^f$
of $H_d$ corresponding to the closure in $H_d$ of the locus
parametrize curves isomorphic to the fibers of $f$. $H_d^f$ is
naturally invariant for the action of $PGL(r+1)$ on it. Let $P$ be
the $GIT$-quotient of $H_d^f$ by $PGL(r+1)$. $P$ gives a
compactification of the relative degree $d$ Picard variety
associated to $f$. The proper map $H_d^f\rightarrow P$ factorizes
through $H_d^f\rightarrow \overline{P}_f^d$, the presentation map.
In fact, even if $P$ is not a coarse moduli space for
$\overline{P}_f^d$, there is a canonical map from $\overline{P}_f^d$
onto $P$ which is universal for morphisms of $\overline{P}_f^d$ into
schemes (see \cite{vistoli} section 2). Now, since the map
$H_d^f\rightarrow P$ is proper, then $\overline{P}_f^d\rightarrow P$
must be proper as well.

\end{proof}

\section*{acknowledgements}
I am very grateful to my Ph.D. advisor Professor Lucia Caporaso for introducing me the problem and for following this work very closely.
I am also grateful to
Simone Busonero and Filippo Viviani for useful conversations.

This work was partially supported by a Funda\c{c}\~ao Calouste Gulbenkian fellowship.

\addcontentsline{toc}{chapter}{References}

\bigskip

\noindent Margarida Melo\\ Dipartimento di Matematica, Universit\`a di Roma Tre - \\
Departamento de Matem\'atica, Universidade de Coimbra.\\
E-mail: \textsl{melo@mat.uniroma3.it}, \textsl {mmelo@mat.uc.pt}.

\end{document}